\documentclass[11pt]{article}

\usepackage[T1]{fontenc}
\usepackage[utf8]{inputenc}
\usepackage{lmodern}
\usepackage[margin=1in]{geometry}
\usepackage{microtype}

\usepackage{amsmath,amssymb,amsthm,mathtools}
\usepackage{array,booktabs,longtable,multirow}
\usepackage[shortlabels]{enumitem}
\usepackage{xcolor}
\usepackage{tikz}
\usetikzlibrary{arrows.meta,positioning,calc,decorations.pathmorphing}
\usepackage{hyperref}
\hypersetup{colorlinks=true,linkcolor=blue!55!black,citecolor=blue!55!black,urlcolor=blue!55!black}
\usepackage[shortlabels]{enumitem}

\usepackage[capitalise]{cleveref}

\newtheorem{theorem}{Theorem}[section]
\newtheorem{proposition}[theorem]{Proposition}
\newtheorem{lemma}[theorem]{Lemma}
\newtheorem{corollary}[theorem]{Corollary}
\newtheorem{definition}[theorem]{Definition}

\theoremstyle{remark}
\newtheorem{remark}[theorem]{Remark}

\crefalias{theorem}{theorem}
\crefalias{definition}{definition}
\crefalias{lemma}{lemma}
\crefalias{proposition}{proposition}
\crefalias{corollary}{corollary}
\crefalias{example}{example}
\crefalias{remark}{remark}

\newcommand{\orb}[1]{[#1]_{\sigma}}

\newcommand{\cF}{\mathcal F}
\newcommand{\cV}{\mathcal V}
\newcommand{\cE}{\mathcal E}

\newcommand{\cX}{\mathcal X}
\newcommand{\Z}{\mathbb Z}
\newcommand{\R}{\mathbb R}

\newcommand{\elltwo}{\ell^2}
\newcommand{\Aut}{\operatorname{Aut}}

\newcommand{\id}{\mathrm{id}}

\usepackage{graphicx}

\title{Regular hyperbolic tilings have no $\ell^2$ eigenfunctions}
\author{Asaf Nachmias\footnote{Department of mathematics, Tel Aviv University,  \texttt{asafnach@tauex.tau.ac.il}}}
\date{}

\begin{document}
\maketitle

\begin{abstract}We show that the adjacency operator of the $1$-skeleton of any regular tiling of the hyperbolic plane has no nonzero square-integrable eigenfunctions. As a consequence, the same holds for every infinite connected regular graph admitting a proper planar embedding with regular dual.
\end{abstract}


\section{Introduction}

The adjacency matrix of any \emph{finite} graph is symmetric and therefore admits an orthonormal basis of eigenfunctions. For an infinite graph, however, the adjacency operator may have no nonzero square-integrable eigenfunctions at all; equivalently, its point spectrum may be empty. Throughout this paper, by an eigenfunction we always mean a square-integrable eigenfunction. Familiar examples with no eigenfunctions include the integer lattices $\Z^d$, for $d\geq 1$, and regular trees.

Furthermore, infinite Cayley graphs can exhibit both point and continuous spectrum. For example, it is not hard to see that the Cayley graph of $\Z\times\Z_2$ with generating set $(1,0), (-1,0), (1,1),(-1,1)$ has both. More importantly, the absence of eigenfunctions is not a group property. Grigorchuk and Pittet \cite[Corollary 1.3]{GrigPittet24} exhibit two finite generating sets of the lamplighter group for which the corresponding adjacency operators have empty point spectrum and pure point spectrum with a basis of finitely supported eigenfunctions.


In this paper we prove the absence of eigenfunctions for the $1$-skeletons of regular tilings of the hyperbolic plane. Recall that for every pair of positive integers $p,q\geq 3$ satisfying ${1\over p}+{1\over q}<{1\over 2}$
there is a tiling of the hyperbolic plane by congruent regular $p$-gons, with precisely $q$ polygons meeting at each vertex. Equivalently, one starts with a regular hyperbolic $p$-gon whose interior angles are $2\pi/q$ and reflects it successively across its sides. The resulting tiling is unique up to isometries of the hyperbolic plane and is called the $(p,q)$ {\bf regular tiling}; see Figure \ref{Fig:37}. The three pairs satisfying
${1\over p}+{1\over q}={1\over 2}$,
namely $(3,6)$, $(4,4)$, and $(6,3)$, give the corresponding regular tilings of the Euclidean plane: the triangular, square, and hexagonal tilings, respectively.
We denote the $(p,q)$ regular tiling by $\cX_{p,q}$. Its $1$-skeleton is the graph whose vertices are the geometric vertices of the tiling, obtained by identifying tile corners that meet at the same point, and whose edges are the sides of the tiles joining adjacent vertices.

\begin{theorem}\label{thm:main} For every pair $(p,q)$ of positive integers satisfying ${1 \over p} + {1 \over q} \leq {1\over 2}$, the adjacency operator of the $1$-skeleton of $\cX_{p,q}$ has no eigenfunctions.  
\end{theorem}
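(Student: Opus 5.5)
The plan is to reduce the absence of $\ell^2$ eigenfunctions to the absence of \emph{finitely supported} eigenfunctions, and then rule those out by a combinatorial boundary argument. For the reduction, let $G$ be the $1$-skeleton of $\cX_{p,q}$ and $\Gamma$ its (vertex-transitive, unimodular) automorphism group. For an eigenvalue $\lambda$, the projection $P_\lambda$ onto $\ker(A-\lambda)$ commutes with $\Gamma$. Its von Neumann dimension $\dim_\Gamma = \langle P_\lambda \delta_o,\delta_o\rangle$ is positive if and only if $\ker(A-\lambda)\neq 0$.

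For amenable-type lattices ($\Z^2$-like Euclidean cases) one can use Fourier analysis directly. For the Euclidean cases $(3,6),(4,4),(6,3)$ the graphs are periodic under $\Z^2$ with a finite fundamental domain. The Floquet--Bloch decomposition writes $A$ as a direct integral of finite matrices $A(\theta)$ whose entries are trigonometric polynomials. An eigenvalue of infinite multiplicity would force $\det(A(\theta)-\lambda)\equiv 0$ on the torus, and one checks directly for these three small matrices that this does not happen. So the main work is the hyperbolic case.

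For the hyperbolic case, I would invoke the standard fact (Linnell, or the ``$\ell^2$ eigenspace is spanned by finitely supported eigenfunctions'' principle for graphs with a suitable planar/approximation structure) in the following form. If $\ker(A-\lambda)\neq 0$, then there exist nonzero finitely supported eigenfunctions. To justify it here I would use a planar structure argument. Take an $\ell^2$ eigenfunction $f$ and truncate it to large combinatorial balls $B_n$ (unions of tiles). Hyperbolic balls have a boundary comparable to their volume, so a naive truncation fails. Instead I would use the layered structure of $\cX_{p,q}$: each layer of tiles attaches to the previous one along a path, and vertices on the new outer boundary have few neighbors inward. Propagating the eigenvalue equation outward from any finite region then determines $f$ on successive layers from its values on earlier layers, via ``forced'' vertices that have exactly one unknown neighbor. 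This unique-continuation-type propagation shows the eigenspace restricted to the complement of a ball is controlled by finitely many boundary values. The von Neumann dimension of the eigenspace is therefore at most a limit of (boundary values)/(volume) along a suitably chosen exhaustion. I expect this dimension count to be the main obstacle: the counting must be organized so that the forced-propagation scheme uses only $o(|B_n|)$ free parameters, even though $|\partial B_n|\asymp|B_n|$.

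Finally, I would rule out finitely supported eigenfunctions. Suppose $f\neq0$ is finitely supported and let $S$ be its support. Consider the outermost layer of tiles meeting $S$ and take a vertex $v\in S$ on the outer boundary of the union of tiles containing $S$. By the combinatorics of the $(p,q)$ tiling with $\frac1p+\frac1q\le\frac12$, there is an outside vertex $w\notin S$ whose only neighbor in $S$ is $v$. Such a vertex is a corner of an exterior tile adjacent to a ``convex'' corner of the boundary, which exists by a Gauss--Bonnet/Euler-characteristic count of boundary turning. The equation $\lambda f(w)=\sum_{u\sim w}f(u)$ then gives $f(v)=0$, a contradiction. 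The Euler-characteristic count, which shows that the boundary cycle of a finite simply connected tile region has a vertex of inner degree at most $2$ (or that some exterior vertex sees a single boundary vertex), is the second delicate step. I would handle it by the standard curvature inequality for $\frac1p+\frac1q\le\frac12$.
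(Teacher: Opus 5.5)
\paragraph{The gap: reducing to finitely supported eigenfunctions.}
Your Floquet--Bloch treatment of the three Euclidean cases is fine. Your final step (no finitely supported eigenfunctions) is essentially the theorem of Klassert, Lenz, Peyerimhoff and Stollmann, which the paper cites as the weaker, previously known result. The genuine gap is the reduction from $\ell^2$ eigenfunctions to finitely supported ones, and that reduction carries the entire difficulty.

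It is not a standard fact outside the amenable setting; results of this kind for $\Z^d$-periodic or amenable covering graphs rely on F{\o}lner exhaustions. It also fails in general, even for planar vertex-transitive graphs. Take the Cayley graph of $C_3*C_3$ from the paper's remarks, i.e.\ the line graph of the $3$-regular tree $T$. There $A+2I=B^{T}B$, with $B$ the vertex--edge incidence matrix of $T$. The paper exhibits an $\ell^2$ eigenfunction with eigenvalue $-2$. A finitely supported $f$ with $(A+2I)f=0$ would satisfy $Bf=0$, which is impossible on a tree (look at a leaf of the support).

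Nor can your layered propagation give the needed count. Suppose you only use eigenvalue equations at vertices whose neighborhoods lie in a finite set $K$. Then you have $|K|-|\partial_{\mathrm{in}}K|$ equations in $|K|$ unknowns, so the local solution space has dimension at least $|\partial_{\mathrm{in}}K|\ge c|K|$ by non-amenability. This holds for every exhaustion and every ordering of the forced vertices. So the $o(|B_n|)$ bound you flag as the main obstacle is unattainable along this route, not merely delicate. Also, if such a count did work it would give $\dim_\Gamma\ker(A-\lambda)=0$ outright, making the finitely supported step unnecessary.

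\paragraph{The missing idea: an invariant labeling instead of an exhaustion.}
What is needed is a global invariant labeling with finitely many values, with the mass-transport principle replacing boundary counts. Your mechanism, a vertex with exactly one unknown neighbor forcing that neighbor, is precisely the prodigy/witness relation of Definition \ref{def:blocks}. By Bordenave--Sen--Vir\'ag, for a unimodular random labeling $\mathbb E\,\mu_o(\{\lambda\})$ is at most the probability that the root is not a prodigy. Because the labels wrap around modulo $L$, the only free vertices are the non-prodigies, and their density can be made $O(1/L)$. A layering by distance from a finite region or from a boundary point is not invariant under a unimodular cocompact group, so it yields no such bound.

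The paper builds the labeling by folding. A finite-index subgroup $H$ acting freely on vertices, together with a monic $H$-cocycle $u$, gives an $H$-equivariant height $h$ (Lemma \ref{lem:height}). The refinement $\xi=d\,h+\tau$ makes every vertex a prodigy, using uniqueness of the perfect matching in $\mathcal T_u$ to break ties among maximal edges. Then $(\xi+\omega)\bmod L$ with uniform $\omega$ has non-prodigy density at most $2d(k+1)/L$. The substantive work is proving that monic cocycles exist for every $(p,q)$, via the permutation certificates, the vertical, horizontal and diagonal sewings, and the explicit seeds. Nothing in your proposal substitutes for this.
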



\begin{figure}[t]
\centering
\includegraphics[scale=0.15]{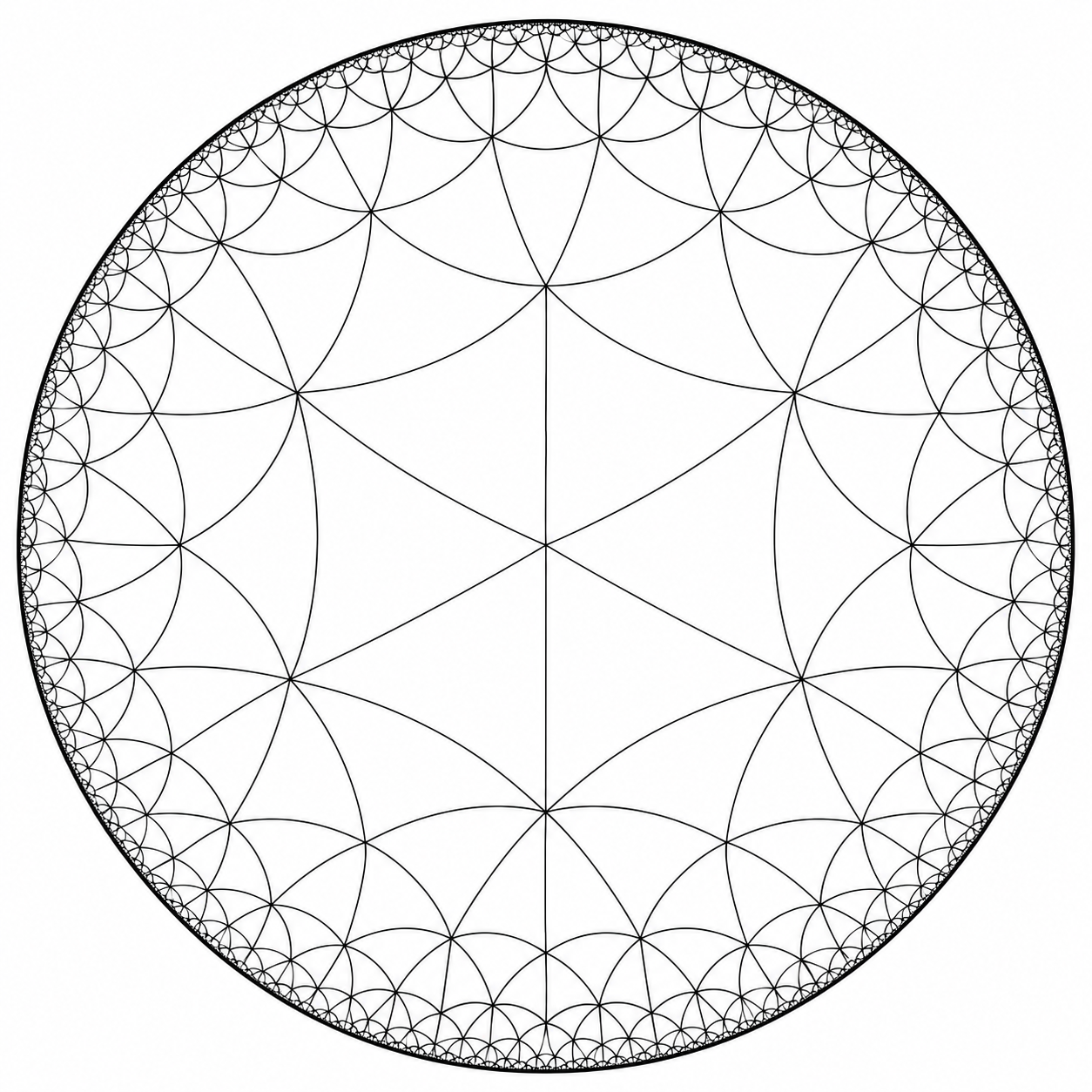}
\caption{\label{Fig:37} The $(3,7)$ regular tiling. Theorem \ref{thm:main} states that for any $\lambda \in \R$ there is no non-zero square-integrable assignment of real numbers to the vertices so that for any vertex the sum of the values of its neighbors equals  $\lambda$ times the value assigned to the vertex itself.}
\end{figure}%

Every infinite connected regular graph with a proper embedding in the plane whose dual is also regular is isomorphic to the $1$-skeleton of a regular $(p,q)$ tiling; see \cite[Remark 4.2]{HaggJonaLyons02}. Theorem \ref{thm:main} therefore immediately yields the following.

\begin{corollary}\label{cor:main}
The adjacency operator of every infinite connected regular graph admitting a proper planar embedding with regular dual has no eigenfunctions. 
\end{corollary}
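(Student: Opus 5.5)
The corollary is a formal consequence of \cref{thm:main} together with the classification recalled just before it, so the plan is to verify that the hypotheses land exactly in the range covered by the theorem. Let $G$ be an infinite connected $q$-regular graph with a proper planar embedding whose dual is $p$-regular, i.e.\ every face has degree $p$. By \cite[Remark 4.2]{HaggJonaLyons02}, $G$ is isomorphic to the $1$-skeleton of $\cX_{p,q}$ for some pair $(p,q)$. Since the adjacency operator and its point spectrum are invariant under graph isomorphism (an isomorphism induces a unitary map of $\ell^2$ spaces that intertwines the adjacency operators), it suffices to show that $(p,q)$ satisfies ${1\over p}+{1\over q}\le {1\over 2}$ and then invoke \cref{thm:main}.

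The key step is to rule out the spherical range ${1\over p}+{1\over q}>{1\over 2}$, and to exclude degenerate small values. Faces of a proper embedding of a simple graph have degree at least $3$. Vertices of degree at most $2$ are also excluded: a connected infinite $1$- or $2$-regular graph is a single edge or $\Z$, and $\Z$ embedded in the plane has a single face of infinite degree, so its dual is not regular of finite degree. Hence $p,q\ge 3$. The pairs with ${1\over p}+{1\over q}>{1\over 2}$ correspond to the Platonic solids. By Euler's formula, a locally finite such embedding would have positive combinatorial curvature at every vertex, bounded below uniformly, which forces finiteness (combinatorial Gauss--Bonnet, or directly: the face/vertex counts from $V-E+F=2$ with $qV=2E=pF$ give finite $V$). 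This contradicts $G$ being infinite. I would simply cite this as part of the content of \cite[Remark 4.2]{HaggJonaLyons02}, which only produces pairs with ${1\over p}+{1\over q}\le{1\over 2}$ for infinite graphs.

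The main obstacle, to the extent there is one, is making sure the cited classification covers exactly the hypotheses of the corollary: that "proper" embedding means locally finite with no accumulation points, so that the faces are genuine polygons, and that the Euclidean cases $(3,6),(4,4),(6,3)$ are included. These cases are included in \cref{thm:main} since the inequality there is non-strict. With this checked, the corollary follows immediately.
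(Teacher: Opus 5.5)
Your proposal follows the paper's argument exactly: invoke \cite[Remark 4.2]{HaggJonaLyons02} to identify the graph with the $1$-skeleton of some $\cX_{p,q}$, which exists only when $\frac1p+\frac1q\le\frac12$ (so the range check is automatic), and then apply \cref{thm:main}. Your extra remarks are not needed, and two of them are inaccurate: the ``direct'' count via $V-E+F=2$ is circular for an infinite graph, and a properly embedded $\Z$ has two infinite faces, not one; the curvature argument or the citation alone suffices.
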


\subsection{Remarks}

\begin{remark} Previous work has mainly concerned the location of the spectral radius, see \cite{Bar02, DvoMo10, Nag25, Warzel15, Zuk97}. The spectral type has remained much less understood; see \cite[Section 3.2]{Warzel15}. Klassert, Lenz, Peyerimhoff, and Stollmann \cite[Theorem 4]{NoCompact06} proved that regular hyperbolic tilings have no compactly supported eigenfunctions. \cref{thm:main} removes the compact support assumption and proves that the entire point spectrum is empty.
\end{remark}

\begin{remark}
The conclusion of Theorem \ref{thm:main} for the three Euclidean tilings corresponding to ${1 \over p}+{1\over q}={1 \over 2}$ is well known and can easily be shown using the Fourier transform or the methods of this paper.
\end{remark}
\begin{remark} In \cite[Theorem 4]{NoCompact06} it is shown that if ${1 \over p} + {1 \over q} < {1\over 2}$, then the $1$-skeleton of $\cX_{p,q}$ has no \emph{compactly supported} eigenfunctions. Theorem \ref{thm:main} strengthens that result. 
\end{remark}

\begin{remark} Apart from the three Euclidean cases, the conclusion of Theorem \ref{thm:main} is known for some pairs $(p,q)$ with ${1\over p}+{1\over q} < 1/2$. Indeed, let $G$ be a torsion-free group satisfying the strong Atiyah conjecture. Every element $a\in\mathbb C[G]$, viewed as an operator on $\ell^2(G)$ via the left regular representation, has kernel of von Neumann dimension either $0$ or $1$; consequently, every such operator that is not a scalar multiple of the identity has no eigenfunctions, and in particular no Cayley graph of $G$ admits eigenfunctions for its adjacency operator (or Laplacian). Since the strong Atiyah conjecture is known to hold for surface groups \cite{SurfaceAtiyah20}, the conclusion of Theorem \ref{thm:main} follows whenever the $1$-skeleton of $\cX_{p,q}$ is a Cayley graph of the fundamental group of a closed orientable surface. For instance, the $1$-skeleton of the $(8,8)$ regular tiling is the Cayley graph of $\langle a,b,c,d \mid [a,b][c,d]=1 \rangle$, the fundamental group of a genus $2$ surface, with the standard generators. 

This argument, however, applies to only a restricted collection of pairs $(p,q)$. To see this, suppose that the $1$-skeleton of $\cX_{p,q}$ is the Cayley graph of a surface group $G$. The action of $G$ on the Cayley graph is free and transitive on vertices, and the planar cell structure is preserved. The quotient is therefore a closed orientable surface with one vertex. Since every vertex of $\cX_{p,q}$ has degree $q$, the quotient has $q/2$ edges; and since every face has degree $p$, it has $q/p$ faces. In particular, $q$ must be even and $p \mid q$. 
Thus many regular tilings cannot arise in this way. Moreover, many regular tilings are not Cayley graphs of any group. Chaboud and Kenyon \cite{ChaboudKenyon96} showed that the $1$-skeleton of $\cX_{p,q}$ is a Cayley graph if and only if $p$ is divisible by some prime not exceeding $q$. Consequently, for many pairs $(p,q)$ there is no underlying Cayley graph structure at all, and the group-theoretic argument above does not apply. Nonetheless, the $1$-skeleton of a tiling is a Schreier graph, a fact used substantially in this paper.
\end{remark}

\begin{remark} Semiregular tilings of the Euclidean or hyperbolic plane are tilings where more than one type of regular polygons may be used. These may have compactly supported eigenfunctions. See for instance Figure \ref{Fig:31414} for the $(3.14.14)$ semiregular tiling of hyperbolic plane. Choose any face $F$ of $14$ edges and assign values $+1,-1,\ldots,-1$ to its vertices consecutively and $0$ to any other vertex to obtain an eigenvector  of eigenvalue $-2$. 

This shows that the assumption that the dual is regular is essential in  Corollary \ref{cor:main}. Hence, it cannot hold under the assumption that $G$ is an infinite one-ended transitive planar graph. Also, vertex transitive planar graphs that are infinitely ended can also exhibit square-integrable eigenfunctions. Consider for instance the free product of $C_3$ with itself, where $C_3$ is the $3$ element group and take the Cayley graph with the standard generators. This is the $4$-regular graph obtained by replacing each vertex of the $3$-regular tree with a triangle. Choose a base vertex and assign it value $1$, to its $4$ neighbors assign value $-1/2$, to each of their neighbors assign value $1/4$ and so forth. It is straightforward to see that this is an eigenfunction of eigenvalue $-2$. Lastly, the two-ended case can also exhibit compactly supported eigenfunctions, this is the case of the Cayley graph of $\Z \times \Z_2$ mentioned earlier.

\end{remark}

\begin{figure}[t]
\centering
\includegraphics[scale=0.15]{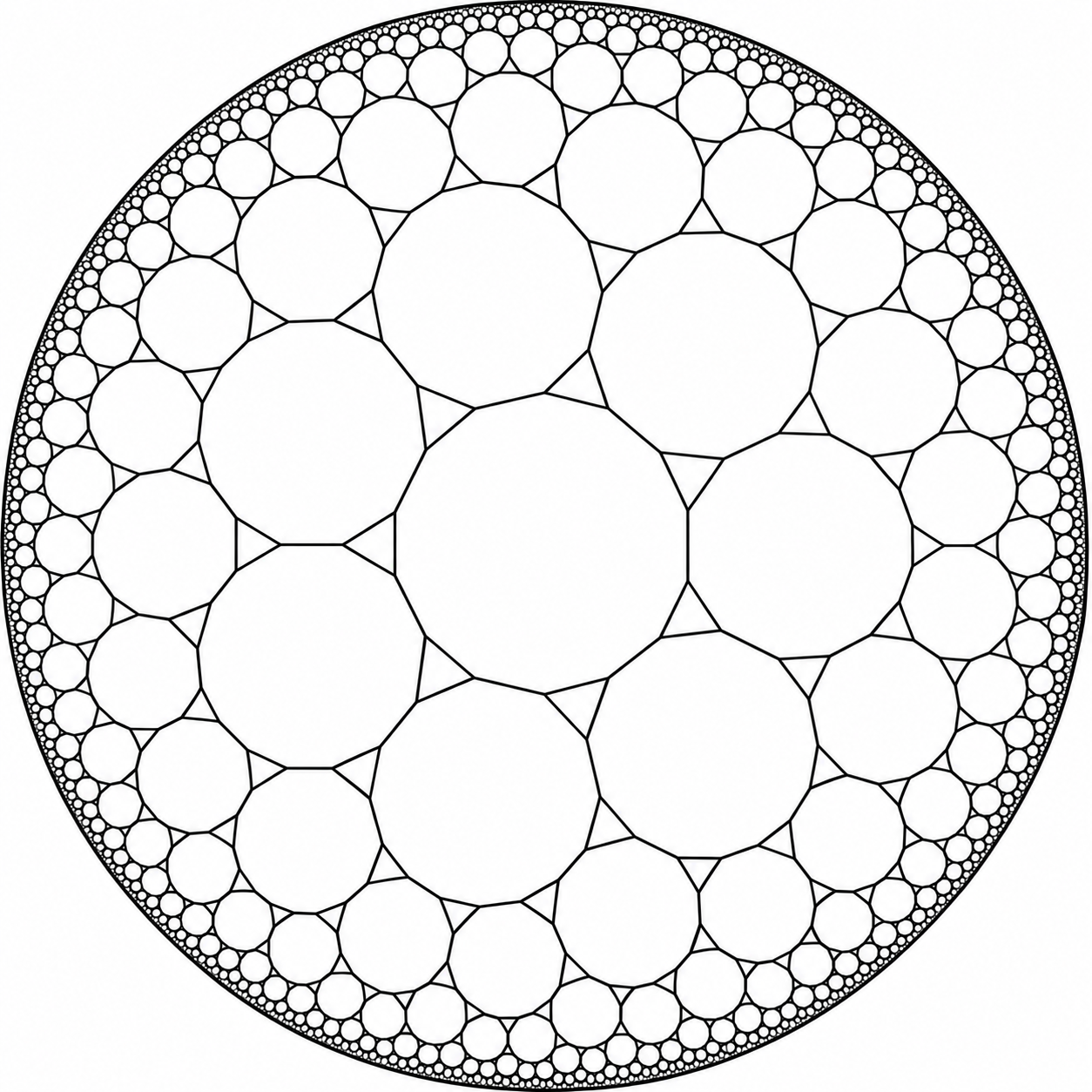}
\caption{\label{Fig:31414} The truncated heptagonal tiling, each vertex belongs to faces of size $3,14$ and $14$.
}
\end{figure}%



\subsection{About the proof}
Our proof relies on a technique of Bordenave, Sen, and Virag \cite{BSV17}. Roughly speaking, if the vertices of a unimodular random graph $(G,o)$ can be invariantly labeled by integers in such a way that most vertices are ``prodigies,'' then one can deduce the absence of eigenfunctions. A vertex is a prodigy if it has a neighbor of lower height, all of whose other neighbors also have height lower than that of the prodigy; see Definition \ref{def:blocks}. We refer to such an integer-valued labeling as a \emph{height function}.


The starting point of this paper is the observation that, for regular tilings, suitable height functions can be constructed by folding the tiling onto a finite map and assigning appropriate values to the edges of the quotient. By \emph{folding}, we mean taking a quotient by a subgroup of the orientation-preserving automorphism group of the tiling which is of finite index and acts cocompactly and freely on the vertices. The resulting finite quotient need not be a surface; more generally, it is an \emph{orbifold}.

Here is a trivial example. Consider $\Z^2$, viewed as the regular $(4,4)$ tiling, and quotient it by its full group of $\Z^2$-translations. The resulting map on the torus has one vertex, one face, and four directed edges, consisting of two edges together with their reverses. Equivalently, this folding can be described directly in terms of edge colors. Color the directed edges of $\Z^2$ by ${0,1,2,3}$ so that, around every clockwise-oriented square, the left, top, right, and bottom edges have colors $0,1,2,3$, respectively. The quotient is obtained by identifying directed edges of the same color, respecting their orientations. On the quotient, assign value $1$ to one directed edge, value $-1$ to its reverse, and value $0$ to the remaining two directed edges. Lifting this assignment to $\Z^2$ gives a $\{-1,0,1\}$-valued function on directed edges. Integrating it along a path from a fixed base vertex to any vertex $v$ yields the usual height function on $\Z^2$.

A nontrivial example is the 
$(5,5)$ regular tiling, see Figure \ref{Fig:55annotated}. Perhaps surprisingly, a very similar construction works. We fold the tiling to a finite quotient with one vertex, one face, and five directed edges. Among these five directed edges there are two pairs consisting of an edge and its reverse, while the remaining edge is its own reverse. To describe the folding, color the directed edges of the $(5,5)$ tiling by ${0,1,2,3,4}$ so that the colors occur around every vertex in the cyclic order $0,1,2,3,4$, and around every face in the cyclic order $0,1,2,4,3$. Such a coloring is shown in Figure \ref{Fig:55annotated}. We then identify directed edges of the same color. Under reversal of orientation, color $0$ is paired with $2$, color $1$ with $3$, while color $4$ is paired with itself. Thus the quotient has one vertex, one face, and five directed edges, with one directed edge identified with its own reverse. Note that the composition of these three permutations on the colors (the face, vertex permutations and edge involution) results in the identity, see Figure \ref{Fig:55annotated}. This is a crucial property that guarantees that the corresponding folding arises from a finite index subgroup of the automorphism group. So in fact we can use many more colors, as long as this property is preserved, and obtain many more foldings (see Definition \ref{def:admissible_permutations} and Theorem \ref{thm:admissible}).

As in the square-lattice example, the boundary of the quotient face contains both a directed edge and its reverse. Assign the values $+1$ and $-1$ to this pair and assign $0$ to the remaining directed edges. The values around the quotient face sum to zero. After lifting this assignment to the $(5,5)$ tiling, we obtain a cocycle: it is orthogonal to every cycle in the tiling, in the sense that its values sum to zero along every cycle. Since the tiling is simply connected, we may  integrate the cocycle from a fixed base vertex to any other vertex obtain a $\Z$-valued equivariant height function, see Lemma \ref{lem:height}. A height function is not enough, we also need to make sure that most vertices are prodigies. Asides from the cocycle condition, in this example it was also important for us that there were no others edges with a $+1$ or $-1$ signs, this guarantees that \emph{all} vertices are prodigy. 

For a general $(p,q)$ tiling, this simple argument is no longer sufficient. In Section \ref{sec:criterion} we formulate conditions on a finite quotient and a cocycle that allow us to construct an equivariant height function for which every vertex is a prodigy. Applying the Bordenave-Sen-Virag Theorem \cite[Theorem 2.3]{BSV17} then yields the absence of eigenfunctions; see Theorem \ref{thm:criterion} and its proof. We also give a combinatorial formulation of these conditions using a permutation representation of the quotient. This leads to a finite certificate, expressed in terms of the defining permutations and cocycle, whose validity can be checked directly; see Definition \ref{def:admissible_permutations} and Theorem \ref{thm:criterion_permutation}. In Section \ref{sec:easypairs} we illustrate the criterion on several ``easy'' tilings for which such certificates can be constructed directly, including the $(5,5)$ regular tiling discussed above.

Lastly, Theorem \ref{thm:admissible} shows that every pair $(p,q)$ admits such a certificate. Its proof relies on a delicate ``sewing'' procedure that produces new certificates from existing ones. By iterating these operations, we reduce the general problem to finitely many pairs $(p,q)$, which serve as ``seeds'' for the construction. Explicit certificates for these seeds are given in Section \ref{app:seeds}. The most difficult pairs tend to be those with small $p$ and $q$ and little symmetry between the roles of vertices and faces. This is reflected in the size of the corresponding certificates: among the seeds, the certificates for $(3,7)$ and $(7,3)$ are the longest.

\subsection{AI statement}\label{sec:ai}
The author used ChatGPT 5.6 Sol in several important parts of the proof. Most significantly, the model designed the ``sewing'' procedure presented in Section \ref{sec:sewing}, including the local modifications of the defining permutations and cocycle and the strategy of combining them so that the proof reduces to finitely many seeds. In addition, the certificates for the $16$ exceptional pairs $(p,q)$ listed in Section \ref{app:seeds} were found by computational searches designed and carried out with the model. Although these certificates were found by computer search, their verification requires no computer assistance and can, albeit tediously, be carried out by hand. The author has verified each of the certificates by hand; the longest took only about ten  unpleasant minutes.

\subsection{Organization}
In the next section we properly formalize the quotient and cocycle condition which together with \cite[Theorem 2.3]{BSV17} yield the absence of eigenfunctions (Theorem \ref{thm:criterion}). There we also define and prove the permutation representation of the quotient. In Section \ref{sec:sewing} we present the sewing procedure, and use it in Section \ref{sec:existenceproof} to prove the existence of certificates for every $(p,q)$. Finally, in Section \ref{app:seeds} we provide the necessary certificate seeds to finish the argument. 

\begin{figure}[t]
\centering
\includegraphics[scale=0.2]{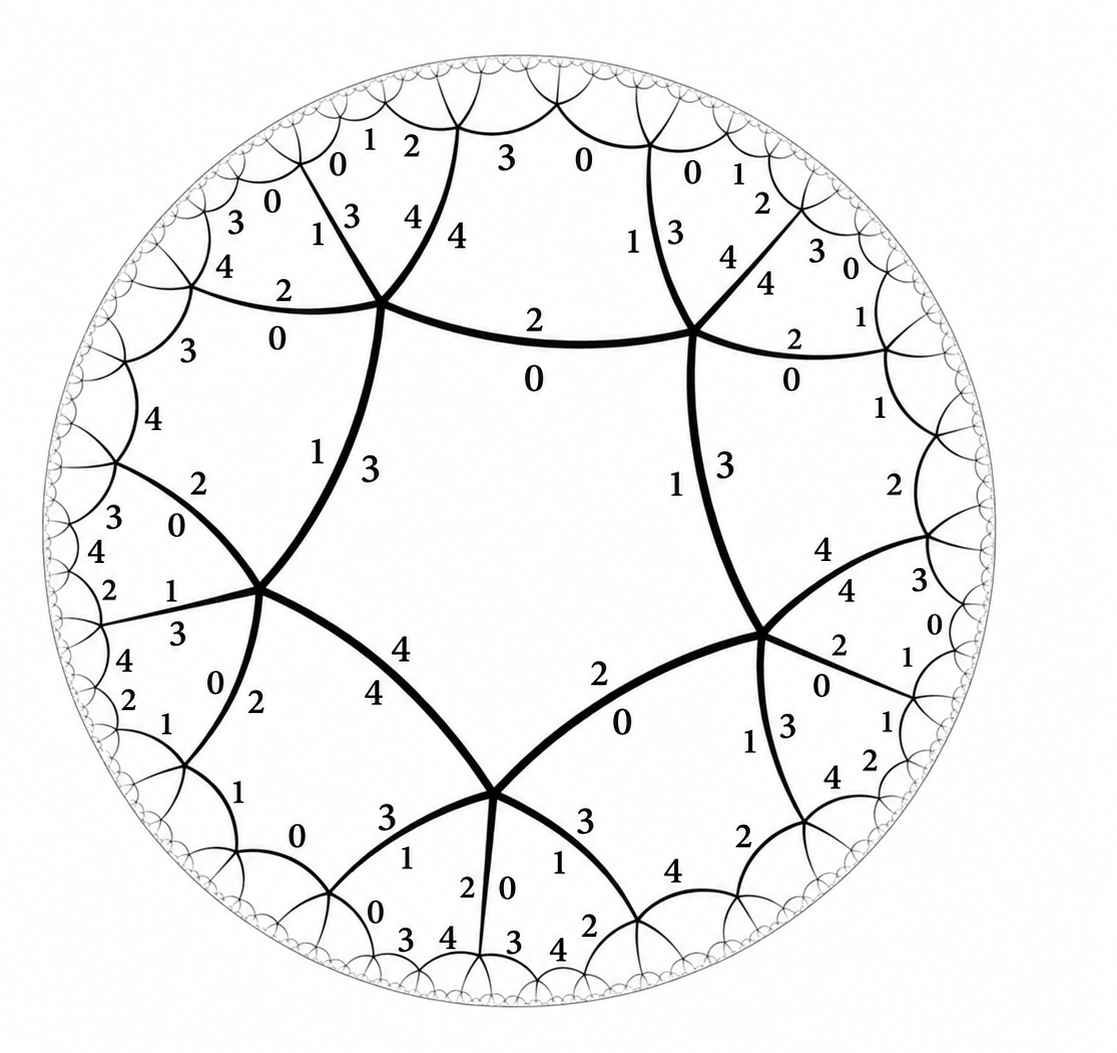}
\caption{\label{Fig:55annotated}The $(5,5)$ regular tiling with colored oriented edges. Each face is oriented clockwise, and the numbers inside it indicate the colors of its oriented boundary edges. The colors occur in the cyclic order $(0\ 1\ 2\ 4\ 3)$ around each face and $(0\ 1\ 2\ 3\ 4)$ around each vertex, while reversal of an edge is described by the involution $(0\ 2)(1\ 3)(4)$. The composition of these three permutations, face permutation followed by vertex permutation followed by edge reversal, is the identity.
 }
\end{figure}%

\section{Quotients, cocycles and permutation representations} \label{sec:criterion}

We regard $\cX_{p,q}$ as a locally finite, simply connected, two-dimensional CW complex whose $0$-cells are the vertices, $1$-cells (edges) are the closed geodesic segments between consecutive vertices in a tile, and the $2$-cells (faces) are the $p$-gons themselves. 
The orientation preserving automorphisms of $\cX_{p,q}$, denoted $\Aut(\cX_{p,q})$ is generated by rotations in angles $\pi$, ${2\pi \over p}$ and ${2\pi \over q}$ around a mid-edge, center of a face and a vertex, respectively. It is isomorphic to the group $\langle a,b,c \mid a^p=b^q=c^2=abc=1 \rangle$.

Let $H\leq\Aut(\mathcal X_{p,q})$ be a subgroup which acts freely and cocompactly on the vertices. Thus, the number of vertex, edge or face orbits is finite and quotient $H \backslash \cX_{p,q}$ is a finite CW complex. We denote the vertices, edges and faces of the quotient by $V(H\backslash \cX_{p,q})$, $E(H \backslash \cX_{p,q})$ and $F(H \backslash \cX_{p,q})$ respectively. The edges and faces inherit the orientation from $\cX_{p,q}$ and we consider the edge and its reverse as distinct elements in $E(H \backslash \cX_{p,q})$ unless they are identified. Note that there may be parallel edges, loops and even edges that are mapped to the reverse image of themselves (if $H$ contains a rotation of angle $\pi$ around the center of an edge) as in Figure \ref{Fig:55annotated}. The faces of $H \backslash \cX_{p,q}$ are also inherited from $\cX_{p,q}$ but may have shorter boundary. For instance, if $H$ contains rotations around the center of a face $F$ so that $F$'s stabilizer has size $r$ with $r \mid p$, then the image of $F$ in the quotient has ${p \over r}$ edges in its boundary. 

\begin{definition}\label{def:cocycle}
An {\bf $H$-cocycle} on $\cX_{p,q}$ is a non-zero function
$u: E(H\backslash \cX_{p,q}) \to \Z$
satisfying:
\begin{enumerate}[label=(\roman*)]
\item \emph{Antisymmetry}. If $e,f\in E(H \backslash \cX_{p,q})$ are the same undirected edge but of reverse orientations, then $u(e)=-u(f)$.
\item \emph{Face law.} If $F \in F(H \backslash \cX_{p,q})$ with oriented boundary $\partial F = e_1+\cdots+e_m$
we have
\[
\sum_{j=1}^m u(e_j)=0 \, .
\]
\end{enumerate}
\end{definition}


Assume $u:E(H\backslash \cX_{p,q}) \to \Z$ is an $H$-cocycle, and set $k=\max _e u(e)$. Antisymmetry implies that $k>0$ and $|u(e)|\le k$ for every $e\in E(H\backslash \cX_{p,q})$. Denote by $v_1,\ldots, v_d$ the vertices $V(H\backslash \cX_{p,q})$  and construct an auxiliary bipartite graph $\mathcal T_u$ on the vertex set  $\{L_1,\ldots,L_d\} \uplus \{R_1,\ldots, R_d\}$ as follows: for each \emph{maximal} edge $e = (v_i,v_j)$ of the quotient, that is, an edge such that $u(e)=k$, add the edge $(L_i, R_j)$ to $\mathcal T_u$. 


\begin{definition}\label{def:monic}
A $H$-cocycle $u:E(H\backslash \cX_{p,q}) \to \Z$ is called {\bf monic} if the bipartite graph $\mathcal T_u$ has a unique perfect matching.
\end{definition}


The following is our criterion of absence of eigenvectors for the $1$-skeleton of CW complexes

\begin{theorem}\label{thm:criterion}
Let $\mathcal X_{p,q}$ be the $(p,q)$ regular tiling and let $H\leq\Aut(\mathcal X_{p,q})$ act freely and cocompactly on the vertices. Let $G$ be the $1$-skeleton of $\mathcal X_{p,q}$. If $\mathcal X_{p,q}$ admits a monic $H$-cocycle, then the adjacency operator of $G$ has no nonzero square-integrable eigenfunction, that is, for every $\lambda\in\R$
\[
\ker(A-\lambda I)=\{0\}
\qquad\text{in }\elltwo(G) \, .
\]
\end{theorem}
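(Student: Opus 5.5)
Lift the cocycle $u$ to an integer-valued height function $h$ on the vertices of $G$. Every vertex must be a prodigy for $h$, and $h$ must be equivariant, so that \cite[Theorem 2.3]{BSV17} applies. The monic condition is what lets us pass from the quotient to a prodigy structure upstairs.

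\textbf{Step 1 (lifting).} Let $\pi:\cX_{p,q}\to H\backslash\cX_{p,q}$ be the quotient map and set $\tilde u(e)=u(\pi(e))$ on directed edges of $G$. Antisymmetry of $u$ gives $\tilde u(\bar e)=-\tilde u(e)$, including for edges whose image is self-reverse: there $u=0$ is forced, and it is consistent. The face law holds on every face of $\cX_{p,q}$. If the stabilizer of a face $F$ has order $r$, the boundary of $F$ covers the boundary of $\pi(F)$ exactly $r$ times, so the sum of $\tilde u$ around $\partial F$ is $r$ times the quotient sum, which is $0$. Since $\cX_{p,q}$ is simply connected, every cycle of $G$ is a sum of face boundaries. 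Hence $\tilde u$ sums to zero on every cycle, and $h(v)=\sum_{e\in\gamma}\tilde u(e)$ is well defined for any path $\gamma$ from a base vertex $o$ to $v$ (this is presumably Lemma \ref{lem:height}). For $g\in H$, $h(gv)-h(v)$ is independent of $v$, so $g\mapsto h(go)-h(o)$ is a homomorphism $H\to\Z$. Consequently all height differences, and hence the prodigy property, are $H$-invariant. Because $H$ is cocompact and acts freely on vertices, $(G,o)$ with $o$ uniformly chosen from a fundamental domain is unimodular, and $h$ is a legitimate invariant labeling up to additive shifts, which is all the BSV argument uses.

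\textbf{Step 2 (prodigies from monicity).} Directly, a vertex $v$ is a prodigy if it has a neighbor $w$ with $h(v)-h(w)$ maximal such that every other neighbor of $w$ is below $v$. The monic hypothesis seems tailored to a layered version in which the roles are assigned by a matching rather than pointwise. The unique perfect matching of $\mathcal T_u$ gives a bijection $\sigma$ from $\{L_i\}$ to $\{R_j\}$ via maximal edges $v_i\to v_{\sigma(i)}$ with $u=k$. A bipartite graph with a unique perfect matching admits an ordering in which each matched edge is forced: there is a vertex of degree one, and one peels iteratively. This peeling order gives a priority order on quotient vertices. The plan is to modify $h$ to $h'=kh+\varepsilon(\text{rank of }\pi(v)\text{ in the peeling order})$, or to use the finer structure of BSV's Theorem 2.3, which I recall allows a lexicographic tie-breaking. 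With this choice, each vertex $v$ of type $v_i$ has the matched lower neighbor (along the lifted edge of $\sigma$), and all other neighbors of that lower vertex have strictly smaller modified height. Drops of size $<k$ are handled by the integer scaling. Drops of size exactly $k$ from a competing parent are excluded by the forcing order, since uniqueness of the matching prevents alternating cycles of maximal edges, which are exactly the configurations where two candidate parents tie.

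\textbf{Step 3 (conclusion).} Every vertex is a prodigy for an equivariant height function, so BSV gives $\ker(A-\lambda I)=\{0\}$ for all $\lambda$. I expect Step 2 to be the main obstacle. Matching the combinatorics of the unique perfect matching, via its peeling order, to the precise prodigy definition (Definition \ref{def:blocks}) requires care with ties at the maximal level $k$ and with loops or parallel edges in the quotient. I would first try the scaled-and-perturbed height $h'$ and check the prodigy inequality vertex type by vertex type in the quotient.
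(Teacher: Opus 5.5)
Your Steps 1 and 2 follow the paper's route: Lemma~\ref{lem:height}, then a tie-break of the height by a ranking of quotient vertices taken from the unique perfect matching (the paper uses $\xi=d\,h+\tau$). Step 3, however, has a genuine gap. The Bordenave--Sen--Vir\'ag criterion, in the form used here (Theorem~\ref{thm:Bordenave}), needs an \emph{invariant} labeling with \emph{finitely many} values. This is the core of the mechanism, not a technicality. At a prodigy $v$ with witness $w$, the eigenvalue equation at $w$ expresses $f(v)$ through values at strictly smaller labels. One then inducts upward from the minimal label to see that an eigenfunction is determined by its restriction to the non-prodigies. A labeling in which every vertex is a prodigy can have no minimal label, so your $h$ (or $h'$) is necessarily unbounded below. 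The induction then has no base, and ``every vertex is a prodigy'' gives nothing by itself. Moreover, $h$ is not an invariant mark; only its increments $\tilde u$ are. Your sentence ``invariant up to additive shifts, which is all the BSV argument uses'' is exactly the unjustified step.

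The missing idea is to wrap the height around a large modulus with a random phase and accept a small density of non-prodigies. Fix $L>2d(k+1)$, let $\omega$ be uniform on $\{0,\dots,L-1\}$, and set $\eta_\omega(v)=(\xi(v)+\omega)\bmod L$.
\begin{itemize}
\item The equivariance $\xi(gv)=\xi(v)+d\,\phi(g)$ makes the law of $\eta_\omega$ $H$-invariant. So $(G,o,\eta_\omega)$ is unimodular with $o$ uniform over orbit representatives, by a mass-transport check using freeness.
\item Neighbors satisfy $|\xi(x)-\xi(y)|\le d(k+1)$. Hence every $y$ that is $w$, or a neighbor of $w$ other than $v$, has $1\le \xi(v)-\xi(y)\le 2d(k+1)$.
\item So when $\eta_\omega(v)\ge 2d(k+1)$ there is no wrap-around, and $v$ is still a prodigy with the same witness. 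This gives $\mathbb P(o\text{ is level or bad})\le 2d(k+1)/L$, hence $\mu_v(\{\lambda\})\le 2d(k+1)/L$ by vertex transitivity, and letting $L\to\infty$ finishes.
\end{itemize}
The bounded increments of the height, which you never use, are essential here.

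Separately, Step 2 still has to be pinned down. With $\pi$ the matching, you need a ranking $\tau$ of quotient vertices with $\tau(j)<\tau(\pi(i))$ for every non-matching edge $L_iR_j$ of $\mathcal T_u$. In words, the matched head must outrank every other head of a maximal edge leaving the same tail. Such a ranking exists because the digraph with arcs $j\to\pi(i)$ is acyclic: a directed cycle gives an alternating cycle, hence a second perfect matching. If you use peeling, you must consistently peel degree-one right vertices and rank in reverse peeling order, since peeling from both sides can reverse the needed inequalities. You also need two further facts:
\begin{itemize}
\item Freeness of $H$, to see that two lifts $(w,v)\neq(w,z)$ project to distinct quotient edges.
\item Uniqueness of the perfect matching in the multigraph sense, to exclude a parallel copy of the matching edge.
\end{itemize}
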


\begin{remark}\label{rmk:Bordenave} Using the recent result of Bordenave \cite[Theorem 4]{Bordenave2026} instead of \cite[Theorem 2.3]{BSV17} in the proof of this theorem (see Section \ref{sec:criterionProof}), we may obtain not only that there are no eigenfunctions, but also that the spectral measure of any small interval $I \subset \R$ is at most $O(1/\log(1/|I|))$ where $|I|$ is the length of the interval. We omit the details.    
\end{remark}

It is classical (see \cite{GarethSingerman78}) that quotients such as $H \backslash \cX_{p,q}$ and other maps on surfaces  can be encoded using permutations. In what follows we ``translate'' the conditions of Theorem \ref{thm:criterion}, that is, Definitions \ref{def:cocycle} and \ref{def:monic}, into these notions. This point of view will be particularly useful to us as it will allow us to systematically enlarge certain quotients of $\cX_{p,q}$ to quotients of $\cX_{p',q'}$ for different pairs $(p',q')$ and making sure the new quotients admit a monic cocycle. 

\begin{definition}
\label{def:admissible_permutations} Let $p,q\geq 3$ be integers satisfying ${1 \over p} + {1\over q} \leq 1/2$. A $(p,q)$-{\bf certificate} is a quintuple $(D,\varphi,\sigma,\alpha,u)$ where $D$ is a finite set, $\varphi,\sigma,\alpha$ are permutations of $D$ and $u$ is a non-zero function $u:D\to \Z$ satisfying the following conditions:
\begin{enumerate}
\item The permutations satisfy 
\[
  \alpha^2=1,
  \qquad
  \alpha\sigma\varphi=1 \, ,
\]
and every cycle of $\sigma$ has length $q$, and every cycle of $\varphi$ is of size $p$ or $1$ (in particular $\sigma^q=1$ and $\varphi^p=1$).



\item The group generated by \(\sigma\) and \(\alpha\) acts transitively on \(D\).

\item \(u(\alpha d)=-u(d)\) for every \(d\in D\).
\item For every \(\varphi\)-cycle \(C\),
\[
  \sum_{d\in C}u(d)=0.
\]
\item If \(k=\max_D u>0\), and if \(\mathcal V=D/\langle\sigma\rangle\), then the bipartite multigraph
\[
  \mathcal T_u
\]
with left and right vertex sets both equal to \(\mathcal V\), and with one edge
\[
  L_{[d]}\longrightarrow R_{[\alpha d]}
\]
for every \(d\in D\) satisfying \(u(d)=k\), has a unique perfect matching.

\end{enumerate}
\end{definition}

\begin{theorem} \label{thm:criterion_permutation} Let $(p,q)$ be a pair of positive integers with ${1 \over p}+{1 \over q} \leq {1 \over 2}$. If  there is a $(p,q)$-certificate, then $\cX_{p,q}$ has a monic $H$-cocycle.
\end{theorem}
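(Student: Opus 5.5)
We will build the subgroup $H$ directly from the permutations, using the standard correspondence between finite transitive permutation representations of the triangle group $\Gamma=\Aut(\cX_{p,q})\cong\langle a,b,c\mid a^p=b^q=c^2=abc=1\rangle$ and its finite-index subgroups. We identify $\Gamma$-orbits of darts with cosets: fix a base directed edge $\vec e_0$ of $\cX_{p,q}$. The generator $c$ is the rotation by $\pi$ about the midpoint of $\vec e_0$, $b$ is the rotation by $2\pi/q$ about its tail, and $a$ is the rotation by $2\pi/p$ about the center of the face to the left (or right, depending on orientation conventions) of $\vec e_0$.

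Since $\Gamma$ acts simply transitively on the directed edges of $\cX_{p,q}$, the map $g\mapsto g\vec e_0$ identifies the set of directed edges with $\Gamma$. Under this identification, right multiplication by $b$, $c$, $a$ corresponds to the local operations of rotating a dart about its tail, reversing it, and advancing it along its face.

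Given a certificate $(D,\varphi,\sigma,\alpha,u)$, define a right action of $\Gamma$ on $D$ by letting $a,b,c$ act as $\varphi,\sigma,\alpha$, possibly composed in an order or with inverses chosen so that the relation $abc=1$ matches $\alpha\sigma\varphi=1$. The relations $\varphi^p=\sigma^q=\alpha^2=1$ and $\alpha\sigma\varphi=1$ show that this is a well-defined action. By condition 2, $\langle\sigma,\alpha\rangle$, and hence $\Gamma$, acts transitively. Fix $d_0\in D$ and let $H=\mathrm{Stab}(d_0)$, a subgroup of index $|D|$.

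The map $\pi\colon g\vec e_0\mapsto d_0\cdot g$ then induces a bijection $E(H\backslash\cX_{p,q})\to D$, which intertwines reversal with $\alpha$, rotation about the tail with $\sigma$, and the face successor with $\varphi$. The conventions must be chosen so that $\pi$ is well defined on $H$-orbits, which is the reason for using left cosets $Hg$.

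Next we check freeness on vertices. The stabilizer of a vertex $g\cdot v_0$ is $g\langle b\rangle g^{-1}$. So $H$ acts freely on vertices iff no nontrivial power of $b$ fixes any point of $D$, i.e.\ iff every $\sigma$-cycle has length exactly $q$, which is condition 1. Cocompactness is automatic from finite index. Under $\pi$, vertices of the quotient correspond to $\sigma$-cycles, and faces correspond to $\varphi$-cycles. A $\varphi$-cycle of length $1$ is a face whose stabilizer in $H$ has order $p$, and its image in the quotient has one boundary edge. A cycle of length $p$ is a face with a trivial stabilizer.

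Now define the cocycle $u_H=u\circ\pi$. Antisymmetry follows from condition 3 because reversal corresponds to $\alpha$. The face law follows from condition 4, since the oriented boundary of a quotient face is exactly the image of a $\varphi$-cycle, listed in order. We also need to check the degenerate length-$1$ cycle, where the face law reads $u(d)=0$. Finally, a maximal quotient edge from $v_i$ to $v_j$ is a dart $d$ with $u(d)=k$, tail class $[d]$ and head class $[\alpha d]$. Thus $\mathcal T_{u_H}$ coincides with the multigraph in condition 5, so $u_H$ is monic.

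The main obstacle is the bookkeeping of conventions. We must choose left versus right actions, which of $a,\varphi$ or $a^{-1},\varphi^{-1}$ correspond, and whether a face lies to the left or the right of its dart, so that three things hold at once:
\begin{itemize}
\item $\pi$ is well defined on $H$-orbits;
\item the face boundary orientation matches the cyclic order of $\varphi$;
\item the relation $\alpha\sigma\varphi=1$ matches $abc=1$ rather than its inverse.
\end{itemize}
We would settle this by checking everything on the single local picture around $\vec e_0$, as in Figure \ref{Fig:55annotated}, before writing the general argument.
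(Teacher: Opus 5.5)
Your proposal is correct and follows essentially the same route as the paper: let the triangle group act on $D$ through $\varphi,\sigma,\alpha$, take $H$ to be the stabilizer of a dart, and identify $H\backslash\Gamma$ both with $D$ and with the quotient's directed edges. Freeness on vertices then follows from all $\sigma$-cycles having length $q$, and the transported $u$ inherits antisymmetry, the face law and the unique-matching condition from items 3--5. The convention issue you flag is harmless: with the right action $d\cdot a=\varphi(d)$, $d\cdot b=\sigma(d)$, $d\cdot c=\alpha(d)$ one gets $d\cdot abc=\alpha\sigma\varphi(d)=d$ when products are read as composition of maps; under the opposite reading, replacing $\varphi,\sigma$ by their inverses works. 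In either case only the orbits matter, since the face law is a sum over a $\varphi$-cycle and $\mathcal T_u$ depends only on $\sigma$-classes.
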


Finally, the result of this paper is concluded by showing the existence of $(p,q)$-certificates. 

\begin{theorem}\label{thm:admissible}
For every pair of integers $p,q\geq 3$ satisfying ${1 \over p} + {1\over q} \leq 1/2$, there exists a $(p,q)$-certificate.
\end{theorem}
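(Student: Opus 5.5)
The strategy follows the introduction: handle finitely many ``seed'' pairs by explicit certificates, and use local surgery (``sewing'') that turns certificates for some pairs into certificates for others. Iterating the surgeries should then reach every pair $(p,q)$ with $\frac1p+\frac1q\le\frac12$.

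\emph{Step 1: basic operations.} Each operation is a purely combinatorial modification of $(D,\varphi,\sigma,\alpha,u)$. We check three things for each one: the relations $\alpha^2=1$ and $\alpha\sigma\varphi=1$ together with the cycle-length conditions; transitivity of $\langle\sigma,\alpha\rangle$; and conditions (3)--(5) on $u$. The natural candidates are the following.
\begin{enumerate}[(a)]
\item A \emph{duality} operation. Exchanging the roles of $\sigma$ and $\varphi$ formally turns a $(p,q)$ structure into a $(q,p)$ structure. However, the face law and the monic condition are not symmetric, and $\varphi$ is allowed fixed points while $\sigma$ is not. So duality cannot be applied for free, and I expect the seeds to include both $(p,q)$ and $(q,p)$; this is consistent with $(3,7)$ and $(7,3)$ both being listed.
\item A \emph{$q$-increasing} operation. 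Pick a $\sigma$-cycle and splice extra darts into it. For instance, insert a new pair $\{x,\alpha x\}$, or a small gadget of darts forming new $\varphi$-cycles of length $p$ or fixed points, arranged so that $\alpha\sigma\varphi=1$ is preserved. The new darts receive $u=0$, or $u$-values that cancel within each new $\varphi$-cycle and never attain $k$. Then $\mathcal T_u$ is unchanged, and uniqueness of the perfect matching persists.
\item A \emph{$p$-increasing} operation, built analogously by lengthening $\varphi$-cycles. Here the face sums must be maintained, for example by inserting darts with $u$-values $+j,-j$ with $0<j<k$, or zero.
\item A \emph{gluing} (connected-sum) operation. Take disjoint unions of certificates for the same $(p,q)$ and swap $\alpha$ on one pair of darts with $u=0$. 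This restores transitivity while keeping $\mathcal T_u$ a disjoint union, which still has a unique perfect matching.
\end{enumerate}

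\emph{Step 2: reduction to seeds.} Operations (b) and (c) should raise $q$ by some fixed step, say by $2$ or by $p$, and $p$ by a fixed step. The rigidity comes from $\alpha\sigma\varphi=1$: inserting one dart changes one cycle of each of $\sigma$, $\varphi$ and $\alpha$ simultaneously. So each operation must come with a parity or divisibility constraint. I would therefore aim for lemmas of the form ``a $(p,q)$-certificate yields a $(p,q+2)$-certificate and a $(p+2,q)$-certificate once $p,q\ge 4$,'' plus separate treatment of $p=3$ or $q=3$. The finitely many pairs not reached from smaller ones, namely those near the curve $\frac1p+\frac1q=\frac12$ and the small cases in each residue class, become seeds. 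The Euclidean pairs and the $(5,5)$ example already handle some of them.

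\emph{Step 3: seeds.} For each seed, exhibit explicit permutations and $u$ and verify the conditions by hand; these are deferred to Section~\ref{app:seeds}.

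\emph{Main obstacle.} The hard part is preserving the monic condition (5) while changing the cycle structure. Inserting darts can create new maximal edges in $\mathcal T_u$ or merge $\sigma$-orbits, that is, vertices of $\mathcal T_u$, and either can destroy uniqueness of the perfect matching. My first attempt would keep every new dart strictly below $k$ and ensure that new $\sigma$-cycles consist only of new darts. I would then add a matched maximal edge between the new vertex and itself, or between it and a freshly created vertex, and order the matching so that it is forced: the new vertex has degree one in $\mathcal T_u$. The small cases $p=3$ or $q=3$ are where this forcing is hardest to arrange, because the face law on triangles leaves very little room.
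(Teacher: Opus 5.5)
\paragraph{Assessment.} You have the right overall shape: explicit seeds plus local surgery. But the proposal does not yet contain the argument. Step~1 only lists candidate operations, Step~2 posits lemmas you ``would aim for,'' and the concrete moves you do suggest violate the certificate conditions.

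\emph{Move (b).} Since $\varphi=\sigma^{-1}\alpha$, a dart $x$ is a $1$-face iff $\alpha x=\sigma x$, and then $\alpha x$ lies on a $p$-face because $q\ge 3$. Suppose you splice a new pair $x$, $y=\alpha'x$ into a vertex as $d\mapsto x\mapsto y\mapsto\sigma(d)$. Then $x$ is a $1$-face, but $\varphi'(y)=d$ and $\varphi'(\alpha\sigma(d))=y$, so the face through $d$ becomes a $(p+1)$-face (or a $2$-face). More generally, every $1$-face dart uses up a distinct $p$-face partner sitting right after it in its vertex. So raising $q$ through $1$-faces consumes ports, and these must be replenished.

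\emph{Move (c).} Here $|D'|-|D|=q(|\cV'|-|\cV|)$, so lengthening faces necessarily creates new vertices. If all new darts have $|u'|<k$, as you propose, then a new vertex made of new darts is isolated on both sides of $\mathcal T_{u'}$. Then $\mathcal T_{u'}$ has no perfect matching at all.

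\emph{Move (d).} The gluing also fails. Re-pairing two ports $d_1\in D_1$, $d_2\in D_2$ with $\sigma$ fixed gives $\varphi'=\varphi\circ(d_1\ d_2)$, which merges two $p$-faces into a $2p$-face. With $\varphi$ fixed instead, one gets a $2q$-vertex.

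Hence nothing supports increments such as $(p,q)\to(p,q+2)$ or $(p+2,q)$ as uniform moves. The reduction in Step~2 rests on operations that have not been shown to exist.

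\paragraph{What is missing.} You need the right increments and an invariant that makes them iterable. The paper's moves add whole cells.
\begin{itemize}
\item Vertical sewing attaches a new $p$-face at each vertex via a submaximal edge or port, giving $(p,q)\to(p,q+p)$.
\item Horizontal sewing attaches a new $q$-vertex inside each $p$-face via a matching edge, port or charged edge, giving $(p,q)\to(p+q,q)$. Its new maximal edges are chosen so that every new node of $\mathcal T_{u'}$ is forced, e.g.\ the isolated edge $L_XR_X$ coming from $(x_2\ x_4)$. This is the fix you sketch under ``main obstacle''.
\item Diagonal sewing applies Insertion A at submaximal darts meeting each $p$-face once, and Insertion B at ports of the missed vertices, giving $(p+1,q+1)$.
\end{itemize}
A bi-cover consists of submaximal edges or ports whose vertices partition $\cV$, together with matching edges, ports or charged edges whose faces partition the $p$-faces, with disjoint supports. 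Each move regenerates a bi-cover (the diagonal move a diagonal bi-cover), which is exactly where the next sewing is performed.

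Undoing vertical and horizontal sewings is then a Euclidean algorithm: subtract the smaller coordinate while $\frac1p+\frac1q\le\frac12$ persists. It ends in one of two places:
\begin{itemize}
\item on the strips $|p-q|\le 2$, handled by (E0)--(E2), a few explicit seeds, and diagonal sewing from $(5,4)$ and $(6,4)$;
\item at one of ten exceptional pairs, handled by explicit seeds and ad hoc $p=3$, $q=3$ constructions.
\end{itemize}
The pairs $(3,7)$ and $(7,3)$ lack bi-covers. They are bypassed through bi-covered certificates for $(3,10)$, $(10,7)$, $(7,10)$, $(10,3)$.
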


\begin{proof}[Proof of Theorem \ref{thm:main}]
Follows immediately by combining Theorems \ref{thm:criterion},  \ref{thm:criterion_permutation} and \ref{thm:admissible}.
\end{proof}

The proofs of Theorems \ref{thm:criterion} and \ref{thm:criterion_permutation} are presented in Sections \ref{sec:criterionProof} and \ref{sec:criterion_permutation} below. The proof of Theorem \ref{thm:admissible} is more involved and is presented in Sections \ref{sec:sewing}, \ref{sec:existenceproof} and \ref{app:seeds}. But before proceeding, let us observe that Theorems \ref{thm:criterion} and \ref{thm:criterion_permutation} already give us a class of regular tilings with no eigenfunctions. We present them now and later we will use them as ``seeds'' to generate certificates for other pairs $(p,q)$ in order to prove Theorem \ref{thm:admissible}.

\subsection{Some easy certificates}\label{sec:easypairs}

Some hyperbolic tilings have particularly easy certificates.  They correspond to a quotient with one vertex. Let $n\geq 1$ be an integer. In each case $D= \{0,\ldots, n-1\}$ and 
$$ \sigma = (0\ 1\ \cdots\ n-1) \, , \quad \varphi = \sigma^{-1} \alpha \, .$$
The permutations $\alpha$ and the cocycle $u$ are specified in the following; unspecified elements of $D$ are fixed by $\alpha$ and have $u$-value zero. 

\begin{proposition}\label{prop:easypairs} The following are  certificates:
\begin{enumerate}
\item[(E0)] For each $n\geq 4$, a certificate for the pair $(n,n)$ is given by
    $$ \alpha = (0\ 2) (1\ 3) \, , \qquad u(0)=1, \quad u(2)=-1 \, .$$ 

\item[(E1)] For each $n\geq 6$, a certificate for the pair $(n-1,n)$ is given by 
$$ \alpha = (0\ 1) (2\ 4) (3\ n-1), \qquad u(2)=1, \quad u(4)=-1 \, .$$

\item[(E2)] For each $n\geq 8$, a certificate for the pair $(n-2,n)$ is given by
$$\alpha=(0\ 1)(2\ 4)(3\ n-3)(n-2\ n-1)\qquad u(2)=1,\quad u(4)=-1\,.$$
\end{enumerate}
\end{proposition}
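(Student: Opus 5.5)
The plan is to verify the five conditions of Definition \ref{def:admissible_permutations} directly for each of the three families. Since $\sigma=(0\ 1\ \cdots\ n-1)$ is a single $n$-cycle, several conditions are immediate in all three cases. Every $\sigma$-cycle has length $n=q$, and $\langle\sigma,\alpha\rangle$ is already transitive on $D$ because $\sigma$ is. The identity $\alpha\sigma\varphi=\alpha\sigma\sigma^{-1}\alpha=\alpha^2=1$ holds by definition of $\varphi$. The map $\alpha$ is a product of disjoint transpositions, so $\alpha^2=1$. Antisymmetry holds because the only nonzero values are $u(2)=1$ (or $u(0)=1$) on one element, and $-1$ on its $\alpha$-partner. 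Every other element is either fixed by $\alpha$ with $u=0$, or paired with another element of $u$-value $0$.

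Condition (5) is also immediate. There is a single vertex, since $D/\langle\sigma\rangle$ is a point, and $k=1$ is attained at exactly one $d$. Hence $\mathcal T_u$ is a single edge $L\to R$, which is trivially a unique perfect matching. One also checks that $1/p+1/q\le 1/2$ under the stated lower bounds on $n$; for instance $(5,6)$ in (E1) gives $11/30$ and $(6,8)$ in (E2) gives $7/24$.

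The real content is computing the cycle structure of $\varphi=\sigma^{-1}\alpha$, that is $\varphi(d)=\alpha(d)-1 \pmod n$. On elements fixed by $\alpha$ this is $d\mapsto d-1$, so $\varphi$ follows long descending runs, and only the few moved points need explicit tracking.

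For (E0), one finds $0\mapsto1\mapsto2\mapsto n-1\mapsto n-2\mapsto\cdots\mapsto 4\mapsto3\mapsto0$. This is a single $n$-cycle containing both $0$ and $2$, so the face sum is $1-1=0$.

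For (E1), $\varphi(0)=0$ is a fixed point. The remaining orbit is $1\mapsto n-1\mapsto 2\mapsto 3\mapsto n-2\mapsto n-3\mapsto\cdots\mapsto 5\mapsto 4\mapsto 1$. This is one cycle of length $n-1=p$ containing both $2$ and $4$, so the face law holds.

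For (E2), I expect $0$ and $n-2$ to be fixed by $\varphi$, since $\varphi(n-2)=\sigma^{-1}(n-1)=n-2$. The other $n-2$ elements should form a single cycle through $1\mapsto n-1\mapsto n-4\mapsto\cdots\mapsto 4\mapsto 1$, picking up $2$ and $3$ via $\varphi(2)=3$ and $\varphi(3)=n-4$. That cycle has length $p=n-2$ and contains both $2$ and $4$.

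The main (though modest) obstacle is this bookkeeping in (E1) and (E2). One must check that the descending runs glue into exactly one long cycle with no stray short cycles, and that the small boundary cases ($n=6$ in (E1), $n=8$ in (E2)) do not collapse a run or make two special points coincide. I would handle this by writing each orbit explicitly as the concatenation of the special transitions with the runs $n-2,\dots,5$ (respectively the analogous run in (E2)). Then I would count its elements and confirm the count equals $p$, checking the smallest $n$ separately by hand.
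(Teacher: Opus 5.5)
Your proposal is correct and follows the same route as the paper. The paper just notes that there is one vertex and a single dart of value $k$, so $\mathcal T_u$ is a single edge, and that the faces are one $n$-face in (E0), an $(n-1)$-face and a $1$-face in (E1), and an $(n-2)$-face and two $1$-faces in (E2).

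One small slip is in your (E2) orbit. There $\varphi(n-1)=\alpha(n-1)-1=n-3$ and $\varphi(n-3)=2$, so the long cycle is $1\mapsto n-1\mapsto n-3\mapsto 2\mapsto 3\mapsto n-4\mapsto\cdots\mapsto 4\mapsto 1$ (for $n=8$ this is $(1\ 7\ 5\ 2\ 3\ 4)$), not $1\mapsto n-1\mapsto n-4$. This cycle still has length $n-2$ and contains $2$ and $4$, so your conclusion stands.
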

\begin{proof}
The proof is just routine verification. In all three cases the quotient has one vertex and a single element of $D$ which receives the maximal value of $u$ so $T_u$ is the graph on two vertices and a single edge. In $(E0)$ the resulting quotient has a single $n$-face, in $(E1)$ there is an $(n-1)$-face and a $1$-face, and lastly in $(E2)$ there is an $(n-2)$-face and two $1$-faces. The rest of the requirements of Definition \ref{def:admissible_permutations} are readily seen to hold.     
\end{proof}

\begin{remark} Note that $(E1)$ does not work for $n=5$ since in that case the $\alpha$ defined is not an involution. For a similar reason $n=6,7$ in $(E2)$ fails. We provide explicit certificates for these $3$ cases in section \ref{app:seeds}
\end{remark}

\subsection{Proof of Theorem \ref{thm:criterion}} \label{sec:criterionProof}

We use the \emph{monotone labeling} technique of Bordenave, Sen and Virag \cite[Theorem 2.3]{BSV17}. Let us recall its setup. Let $G=(V,E)$ be a connected bounded degree infinite graph and $A_G:\elltwo(V)\to\elltwo(V)$ be the adjacency operator. For a vertex $v$, let $\mu_v$ be the spectral measure of $A$ at $\delta_v$, that is,
\[
\int_{\R} f(\lambda)\,d\mu_v(\lambda)
 =\langle \delta_v,f(A)\delta_v\rangle
\]
for bounded Borel functions $f$. The operator $f(A)$ is defined using the Borel functional calculus. In our setup $G$ will be a vertex-transitive graph so $\mu_v$ is the same measure for all vertices $v$. Our goal is to show that $\mu_v(\{\lambda\})=0$ for all $\lambda \in \R$.

\begin{definition}\label{def:blocks}
Let $G=(V,E)$ be a graph. Given a labeling $\eta:V \to \Z$ a vertex
$v$ is called
\begin{enumerate}[label=(\roman*)]
\item {\bf prodigy} if there is a vertex $\hat v$ adjacent to $v$ such that
\[
\eta(\hat v)<\eta(v)
\]
and every other vertex $u\sim\hat v$ with $u\neq v$ satisfies
\[
\eta(u)<\eta(v) \, .
\]
The block $\hat v$ is called a {\bf witness} for $v$ being prodigy.

\item {\bf level} if it is not prodigy and every neighbor of it has label at most $\eta(v)$;
\item {\bf bad} otherwise.
\end{enumerate}
\end{definition}

In the following 
The following is an immediate consequence of \cite[Theorem 2.3]{BSV17}, stated in the form needed here.

\begin{theorem}\label{thm:Bordenave} 
Let $(G,o)$ be a unimodular rooted graph that is almost surely bounded degree, and let $\eta$ be an invariant labeling having $L$ distinct values. Then for any $\lambda\in \R$
\[
\mathbb E\,\mu_o(\{\lambda\})
\le
\mathbb P(o\text{ is level or bad}) \, . \]

\end{theorem}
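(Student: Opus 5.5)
The plan is to derive the statement from \cite[Theorem 2.3]{BSV17} by matching our definitions to theirs. If that matching should fail somewhere, I would instead reprove it directly with the standard von Neumann dimension argument sketched below, which I believe is also the mechanism behind \cite{BSV17}.

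\textbf{Step 1: match the setup.} I will check that our notions of prodigy, witness, level and bad vertex (Definition~\ref{def:blocks}) are exactly, or at least imply, the corresponding notions in \cite{BSV17}. Their theorem is stated for a unimodular random graph with an invariant labeling taking finitely many values, and bounds the atom $\mathbb E\,\mu_o(\{\lambda\})$ by the probability that the root is not a prodigy. Since ``level or bad'' is precisely ``not prodigy'', the inequality should then be a direct restatement.

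\textbf{Step 2: the propagation identity.} If I have to reprove the bound, this is the key local identity. Let $f\in\ker(A-\lambda I)$ and let $v$ be a prodigy with witness $\hat v$. The eigen-equation at $\hat v$ gives
\[
f(v)=\lambda f(\hat v)-\sum_{u\sim \hat v,\,u\ne v} f(u),
\]
and every vertex on the right-hand side has label strictly smaller than $\eta(v)$. Since $\eta$ takes only $L$ values, induction on the label, starting from the minimal one, shows that $f$ is determined by its restriction to the non-prodigy vertices. Hence the restriction map
\[
\ker(A-\lambda I)\to \elltwo(\{\text{level or bad vertices}\})
\]
is injective. This is where the hypothesis of finitely many labels enters: it guarantees the induction terminates.

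\textbf{Step 3: pass to expectations.} Both spaces in Step 2 are invariant subspaces, so I compare their von Neumann dimensions with respect to the unimodular trace $\mathbb E\langle \delta_o,\cdot\,\delta_o\rangle$.
\begin{itemize}
\item The dimension of the eigenspace is $\mathbb E\langle\delta_o,P_\lambda\delta_o\rangle=\mathbb E\,\mu_o(\{\lambda\})$, where $P_\lambda$ is the spectral projection onto $\ker(A-\lambda I)$.
\item The dimension of $\elltwo$ of the non-prodigies is $\mathbb P(o \text{ is level or bad})$.
\end{itemize}
An injective invariant bounded map cannot increase von Neumann dimension, by the polar decomposition and unimodularity (mass transport). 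This gives the claimed inequality.

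\textbf{Main obstacle.} The delicate part is Step 3: making the dimension comparison rigorous in the unimodular random setting. In particular one needs the restriction map to be equivariant, which holds because $\eta$ is an invariant labeling. This is exactly what \cite{BSV17} handles, so I would rely on their Theorem~2.3 and only verify the definitional matching of Step 1.
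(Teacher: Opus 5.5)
Your main route is the paper's own: the paper gives no separate argument and simply invokes \cite[Theorem 2.3]{BSV17}, and ``level or bad'' in Definition~\ref{def:blocks} is exactly ``not a prodigy''. Your fallback sketch is also a sound self-contained proof. The eigen-equation at the witness, together with induction over the finitely many labels, determines an eigenfunction from its values on non-prodigies, and polar decomposition of $P_S P_\lambda$ in the unimodular von Neumann algebra then gives $\mathbb E\,\mu_o(\{\lambda\})\le \mathbb P(o \text{ is not a prodigy})$. Here $P_\lambda$ is the eigenprojection and $P_S$ is the projection onto functions supported on the non-prodigies.
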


We proceed with the proof \cref{thm:criterion}. 
Given a subgroup $H\leq \Aut(\cX)$ and an $H$-cocycle $u:E(H\backslash \cX)\to \Z$, let $\tilde u:E(\cX)\to \Z$ be the lift of $u$, i.e., define \(\widetilde u\) on each edge of \(\cX\) to be the value of \(u\) on its corresponding edge in $H\backslash \cX$.

\begin{lemma}\label{lem:height} Let $H\leq \Aut(\cX_{p,q})$ be a subgroup which acts freely and cocompactly on the vertices. If $u:E(H\backslash \cX)\to \Z$ is an $H$-cocycle, then there exists a function
\[
h:V(\mathcal X) \longrightarrow\Z
\]
and a homomorphism
\[
\phi:H\longrightarrow\Z
\]
such that for every oriented edge $(w,v)\in E(\mathcal X)$
\[
h(v)-h(w)=\tilde{u}(w,v)
\]
and for every $g\in H$ and $v\in V(\cX)$ 
\[
h(gv)-h(v)=\phi(g) \, .
\]
\end{lemma}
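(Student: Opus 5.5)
The plan is to view $\tilde u$ as a $1$-cochain on the CW complex $\cX=\cX_{p,q}$ and show that it is a cocycle in the usual sense. Then simple connectivity of $\cX$ lets us integrate it to a function $h$ with $dh=\tilde u$, and $H$-invariance of $\tilde u$ forces $h\circ g-h$ to be constant for each $g\in H$.

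First, $\tilde u$ is well defined and $H$-invariant by construction: $\tilde u(ge)=\tilde u(e)$ for all $g\in H$ and all oriented edges $e$. Antisymmetry lifts directly. If $e$ and its reverse $\bar e$ lie in different $H$-orbits, their images are reverses of one another in the quotient, so (i) gives $\tilde u(\bar e)=-\tilde u(e)$. If they lie in the same orbit, the quotient edge is its own reverse, and (i) applied with $e=f$ forces $u=0$ there, which is consistent.

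Second, the face law lifts. Let $F$ be a face of $\cX$ with boundary $e_1+\dots+e_p$ and let $r$ be the order of its stabilizer in $H$. The stabilizer is cyclic, generated by a rotation through $2\pi/r$, and it permutes the boundary edges in blocks of length $p/r$. Hence the boundary of the image face is $\bar e_1+\dots+\bar e_{p/r}$, and
\[
\sum_{j=1}^p \tilde u(e_j)=r\sum_{j=1}^{p/r}u(\bar e_j)=0 .
\]
Freeness on vertices rules out rotations about vertices; rotations about edge midpoints have already been handled in the first step.

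Third, integrate. Fix a base vertex $o$ and define $h(v)$ as the sum of $\tilde u$ along any edge path from $o$ to $v$. The main point is that $h$ is well defined. Any two such paths differ by a closed edge path. Because $\cX$ is simply connected, this closed path is null-homotopic in the $2$-skeleton, so combinatorially it is a product of conjugates of face boundaries together with backtracks. Each face boundary contributes $0$ by the second step, and each backtrack contributes $0$ by antisymmetry. Equivalently, $H^1(\cX;\Z)=0$, so the cocycle $\tilde u$ is a coboundary. The relation $h(v)-h(w)=\tilde u(w,v)$ then holds by definition.

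Finally, fix $g\in H$ and set $\psi(v)=h(gv)-h(v)$. For every edge $(w,v)$,
\[
\psi(v)-\psi(w)=\tilde u(gw,gv)-\tilde u(w,v)=0
\]
by $H$-invariance. Since $\cX$ is connected, $\psi$ is a constant $\phi(g)$. The homomorphism property follows from
\[
h(g_1g_2v)-h(v)=\bigl(h(g_1(g_2v))-h(g_2v)\bigr)+\bigl(h(g_2v)-h(v)\bigr)=\phi(g_1)+\phi(g_2).
\]
The step needing the most care is the second one, lifting the face law when faces have nontrivial stabilizers. The rest is standard.
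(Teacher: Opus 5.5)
Your proof is correct and follows essentially the same route as the paper: you lift the face law using the cyclic face stabilizer of order $r \mid p$, integrate $\tilde u$ from a base vertex using simple connectivity, and use $H$-invariance of $\tilde u$ to show that $h(gv)-h(v)$ is independent of $v$ and additive in $g$. You also make explicit two points the paper leaves implicit, namely that antisymmetry lifts to $\cX$ and that backtracks contribute zero, which is where path-independence needs them.
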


\begin{proof} By part (ii) of Definition  \ref{def:cocycle}, the sum over the values of $u$ over the edges of a face in $F(H\backslash \cX)$ vanishes. This implies that the sum over the values of $\tilde u$ over the edges of a face in $F\in F(\cX)$ also vanishes. Indeed, if $F$ has $p$ edges then the stabilizer of $F$ in $H$ (containing only rotations of $F$) must be of size $r$ where $r \mid p$; thus each edge of the corresponding quotient edge appears precisely $r$ times in $F$.

Since $\cX$ is simply connected it follows that $\tilde{u}$ vanishes on every cycle of $\cX$, so as usual, we may ``integrate'' over to obtain a primitive $h$. More precisely, for $P=e_1,\ldots,e_m$ forming a directed path in $\cX$ we write
\[
\int_P \tilde{u} :=\sum_{j=1}^m \tilde{u}(e_j) \, .
\]
Fix a base vertex $o$ and for $v\in V(\cX)$, choose a path $P_v$ from $o$ to $v$. Define
\[
h(v) :=\int_{P_v} \tilde{u} \, ,
\]
which is independent of the path $P$ since $\tilde{u}$ vanishes on cycles.
Next, for each $g\in H$, the difference $h(gv)-h(v)$ is independent of $v$. Indeed, integrate along a path from $v$ to $w$ and use $H$-invariance of $\tilde{u}$ to obtain that
\[
h(gw)-h(gv)=h(w)-h(v) \, .
\]
Thus we may define
\[
\phi(g)=h(gv)-h(v) \, ,
\]
and it immediately follows that $\phi$ is a homomorphism. 
\end{proof}

\begin{proof}[Proof of Theorem \ref{thm:criterion}]
Let $\mathcal X$ denote the simply connected complex and $G$ its
$1$-skeleton. Write $Q=H\backslash\mathcal X$, number its vertices
$0,\ldots,d-1$, and let $\tau:V(X)\to \{0,\ldots,d-1\}$ record the quotient vertex of each vertex of $G$. Let $u$ be the monic $H$-cocycle,  $\widetilde u$ be its lift to $G$, and set
$k=\max u$. Since $u$ is nonzero and antisymmetric, $k\geq 1$ and
$|\widetilde u(e)|\leq k$ for every oriented edge $e$.

Recall that $T_u$ has an edge $L_iR_j$ for each oriented quotient edge
from $i$ to $j$ whose $u$-value is $k$, with parallel edges retained. Since $T_u$ has a perfect matching, there exists a permutation $\pi$ of $\{0,\ldots, d-1\}$ so that $L_i R_{\pi(i)}$ is the matching. Since this matching is unique, we can relabel the set $\{0,\ldots, d-1\}$ so that if $L_i R_j$ is a non matching edge of $T_u$, then $j < \pi(i)$. Indeed, this follows since the directed graph on the set $\{0,\ldots, d-1\}$ obtained by adding the edge $j \to \pi(i)$ when $L_iR_j$ is a non matching edge of $T_u$, is acyclic due to the uniqueness of the perfect matching (otherwise we could replace edges of the matching along that cycle and obtain a contradiction to uniqueness).

Apply Lemma \ref{lem:height} and let $h:V(G)\to \Z$ and $\phi:H\to\Z$ denote the corresponding equivariant height functions and homomorphism, respectively.
Define the modified height function $\xi:V(G)\to\Z$ by
\begin{equation*}\label{eq:repaired-modified-height}
  \xi(v)=d\,h(v)+\tau(v).
\end{equation*}
Since $\tau(gv)=\tau(v)$ for any $g\in H$, we have
\begin{equation}\label{eq:repaired-modified-equivariance}
  \xi(gv)=\xi(v)+d\,\phi(g),\qquad g\in H.
\end{equation}

We now observe that every vertex is a prodigy for $\xi$. If $L_i R_{\pi(i)}$ is a matching edge of $T_u$, then it corresponds to a quotient edge $(w,v)$ with $u$-value $k$ so that $\tau(w)=i$ and $\tau(v)=\pi(i)$. By Lemma \ref{lem:height} we have $h(v)-h(w)=k$ and
$$ \xi(v) - \xi(w) = dk + \pi(i) - i \geq dk - (d-1) \geq 1 \, .$$
Let $z \neq v$ be any other neighbor of $w$. If $h(z) - h(w) < k$, since $h$ is $\Z$-valued we get $h(v)-h(z) \geq 1$, hence
$$ \xi(v) - \xi(z) = d(h(v)-h(z)) + \tau(v) - \tau(z) \geq d - (d-1) = 1\, .$$
Otherwise $h(z)-h(w)=k$ and so the edge $(w,z)$ is maximal and present in $T_u$. Since $H$ acts freely on the vertices, the edges $(w,z)$ and $(w,v)$ are different quotient edges. Therefore $\tau(z) < \tau(v)=\pi(i)$. Hence
$$ \xi(v) - \xi(z) = \pi(i) - \tau(z) > 0 \, ,$$
so $v$ is prodigy for $\xi$. 

Next, fix an integer $L>2d(k+1)$, let $\omega$ be drawn uniformly from
$\{0,\ldots,L-1\}$ and define
$$ \eta_\omega(v)=(\xi(v)+\omega)\bmod L \, . $$
The constant $d(k+1)$ is chosen since $|\xi(x)-\xi(y)|\leq d(k+1)$ whenever $x \sim y$. By~\eqref{eq:repaired-modified-equivariance}, for each $g\in H$,
\[
  \eta_\omega(gv)
  =\bigl(\xi(v)+\omega+d\,\phi(g)\bigr)\bmod L.
\]
Adding a fixed residue preserves the uniform law of $\omega$. Hence equivariance guarantees that the law of the entire labeling is $H$-invariant. Furthermore, it is standard to verify unimodularity. Indeed, choose  representatives $v_1,\ldots,v_d$ of the vertex
 orbits, and independently of $\omega$, choose $o$ uniformly from
 these representatives. We verify that $(G,o,\eta_\omega)$ is
 unimodular. Let $F$
 be any nonnegative measurable mass transport, invariant under
 isomorphisms of doubly rooted marked graphs. Freeness implies that
 each vertex has a unique representation $gv_j$ with $g\in H$. Thus
 \begin{align*}
   \mathbb E\sum_{x\in V(G)}F(G, \eta_\omega, o,x)
  &=\frac1d\sum_{i,j=1}^d\sum_{g\in H}
    \mathbb E_\omega F(G, \eta_\omega,v_i,gv_j)\\
  &=\frac1d\sum_{i,j=1}^d\sum_{g\in H}
    \mathbb E_\omega F(G,\eta_\omega,g^{-1}v_i,v_j) =\mathbb E\sum_{x\in V(G)}F(G, \eta_\omega,x,o).
\end{align*}
The second equality uses $H$-invariance of the labeling law; the
last uses the bijection $g\mapsto g^{-1}$ and the orbit
decomposition. 


Let $v$ and its witness $w$ be as above. Since $v$ is prodigy, for any vertex $y\neq v$ that is either $w$ or one of $w$'s neighbors we have that $1 \leq \xi(v)-\xi(y) \leq 2d(k+1)$. If $\eta_\omega(v) = j \geq 2d(k+1)$, then $\eta_\omega(y) = j - (\xi(v)-\xi(y))$ is in $\{0,\ldots, j-1\}$, so $v$ is still prodigy for $\eta_\omega$ with the same witness. It follows that 
$$ \mathbb P(o\text{ is level or bad})\leq\frac{2d(k+1)}{L} \, ,$$
from which Theorem \ref{thm:Bordenave} implies that for any $\lambda\in \R$
we have
$$ \mathbb E\mu_o(\{\lambda\})
  \leq\mathbb P(o\text{ is level or bad})
  \leq\frac{2d(k+1)}{L} \, ,$$
but since our graph is vertex transitive and deterministic, we get that $\mu_v(\{\lambda\}) \leq 2d(k+1)/L$ for any $v\in V$. Taking $L\to\infty$ gives that $\mu_v(\{\lambda\})=0$ for all vertices. We conclude that there are no eigenfunctions. 
\end{proof}


\subsection{Proof of \cref{thm:criterion_permutation}} \label{sec:criterion_permutation}
Let
\[
  \operatorname{Aut}(\cX_{p,q}) =\langle a,b,c\mid a^p=b^q=c^2=abc=1\rangle 
\]
be the orientation-preserving triangle group.  Here \(a\) is the $2\pi/p$ rotation about
the center of a $p$-polygon in $\cX_{p,q}$, \(b\) is the $2\pi/q$ rotation around a vertex, and \(c\) is
the $\pi$ rotation around the midpoint of an edge. These define a right action  $\operatorname{Aut}(\cX_{p,q})$ on the vertices, edges and faces of $\cX_{p,q}$. In particular, on the edges, right multiplication by \(a\) moves one step
around the incident face, right multiplication by \(b\) moves one step around
the initial vertex, and right multiplication by \(c\) reverses the edge.


Next, given a $(p,q)$-certificate we define an action of $\operatorname{Aut}(\cX_{p,q})$ on $D$ by
\[
  d\cdot a=\varphi(d) \, ,
  \qquad
  d\cdot b=\sigma(d) \, ,
  \qquad
  d\cdot c=\alpha(d) \, ,
\]
for each $d\in D$. Indeed, the defining relations of \(\operatorname{Aut}(\cX_{p,q})\) are satisfied due to the first item of Definition \ref{def:admissible_permutations}.  The relation
$d\cdot a^p=d$ holds because each cycle length of $\varphi$ (which are either $1$ or $p$) divides $p$;
the relation $d\cdot b^q=d$ holds because $\sigma$ has cycles only of length $q$; and
$d\cdot c^2=d$ holds because \(\alpha\) is an involution.  Finally, $d\cdot cba = d$, since
$\varphi\sigma\alpha   =\varphi(\varphi^{-1}\alpha)\alpha
  =1$.
  
Let $x_1\in D$ be arbitrary. We define our subgroup $H$ by
\[
  H = H(D,\varphi,\sigma,\alpha) :=\{\gamma\in\operatorname{Aut}(\cX_{p,q}):x_1\cdot\gamma=x_1\} \, ,
\]
that is, $H$ is the stabilizer of $x_1$ of the action above.  

Let us now describe the quotient complex $H\backslash \mathcal X_{p,q}$. Since $\operatorname{Aut}(\cX_{p,q})$ acts freely on the edges of $\cX_{p,q}$, we may choose a base
oriented edge $e_0$ of $\mathcal X_{p,q}$ and identify the edges of $\cX_{p,q}$ with $\operatorname{Aut}(\cX_{p,q})$. The local moves around faces, vertices, and edges are given by right multiplication by
$a,b,c$, respectively. Therefore, $\gamma_1 e_0$ and $\gamma_2 e_0$ correspond to the same edge in $H\backslash \mathcal X_{p,q}$ if and only if $\gamma_1 = h\gamma_2$ for some $h\in H$, hence the left cosets $H\backslash\operatorname{Aut}(\cX_{p,q})$ represent the edges of $H\backslash \mathcal X_{p,q}$. Similarly, the quotient's vertices are identified with $H\backslash \operatorname{Aut}(\cX_{p,q})/\langle  b \rangle$ and the faces are $H\backslash \operatorname{Aut}(\cX_{p,q})/\langle a \rangle$. 
Since $H$ is the stabilizer of $x_1$, the map 
\[
  H\gamma\longmapsto x_1\cdot\gamma
\]
is a bijection from $H\backslash\operatorname{Aut}(\cX_{p,q})$ to $D$ which is identified with the quotient's edges. The group $\operatorname{Aut}(\cX_{p,q})$ acts on the set of left cosets $H \backslash \operatorname{Aut}(\cX_{p,q})$ on the right by $H\gamma \cdot g = H \gamma g$. 
Under the bijection above, this right action of the elements $a,b$ and $c$ become exactly applying $\varphi, \sigma$ and $\alpha$ on $D$. Thus, the quotient's vertices correspond to the $\sigma$-orbits in $D$ and the faces to the $\varphi$-orbits. Furthermore, an element of $\operatorname{Aut}(\cX_{p,q})$ stabilizing a vertex of $\mathcal X_{p,q}$ must be of the form $\gamma  b^n \gamma^{-1}$ for some $\gamma \in \operatorname{Aut}(\cX_{p,q})$ and $n\geq 0$ integer. For such an element to be in $H$ we must have that $x_1 \cdot \gamma b^n = x_1 \cdot \gamma$, but the action of $b$ corresponds to applying $\sigma$. Since $\sigma$ has cycles on of length $q$ it forces $n=0 \mod q$. Hence, $H$ acts freely on the vertices of $\mathcal X_{p,q}$. 

Lastly, the bijection between $H \backslash \operatorname{Aut}(\cX_{p,q})$ to $D$ allows to define $u:E(H\backslash \mathcal X_{p,q})\to \Z$ by the $u:D\to\Z$ provided in Definition \ref{def:admissible_permutations}. This is immediately seen to be a monic $H$-cocycle, due to items $3,4$ and $5$ in Definition \ref{def:admissible_permutations}.
\qed





\section{Vertical,  horizontal and diagonal sewings} \label{sec:sewing}

We focus here on $(p,q)$-certificates defined in Definition \ref{def:admissible_permutations} and show that with additional requirements of them, we may generate new certificates for $(p,q+p)$,  $(p+q,q)$ and $(p+1,q+1)$ we call these operations \emph{vertical, horizontal} and \emph{diagonal sewings}. 

In the vertical sewing we attach a new $p$-face to each vertex and ``sew'' the corresponding cycles together so that every old $q$-vertex becomes a $(q+p)$-vertex; we have to make sure that the sewing does not interfere with the edges that receive maximal $u$-value. The auxiliary graph $T_u$ remains unchanged in this operation. In the horizontal sewing we attach a new $q$-vertex to each $p$-face and we ``sew'' it into the face so that the new faces are all $1$-faces or $(p+q)$-faces. The sewing again must not interfere with edges having maximal $u$-value but more importantly, every new quotient vertex creates a new left vertex and a new right vertex in $T_u$, so we must maintain the uniqueness of the perfect matching in $T_u$. For this reason the horizontal sewing requires more bookkeeping. Lastly in diagonal sewing we enlarge every $p$-face and every vertex by one. 

We will use the following terminology throughout the sewing arguments:

\begin{itemize}[leftmargin=2.2em]
\item An element of $D$ is a {\bf dart}.


\item Elements of the  $\cV=D/\langle\sigma\rangle$ will be called {\bf vertices} and elements of $\cF=D/\langle \varphi \rangle$ are called {\bf faces}. If a face has size $p$ we call it a {\bf $p$-face}. We call $\alpha$-orbits {\bf edges}.

\item A fixed point of $\alpha$ is a {\bf port}.  Antisymmetry forces every
port to have $u$-value $0$.

\item An edge $(a,b)$ is called {\bf charged} if the dart $a$ lies on a $p$-face and the dart $b=\alpha a$ constitutes a $1$-face. In that case $u(a)=u(b)=0$ and $\sigma(b)=a$ so $a$ and $b$ lie in the same vertex.

\item Put $k=\max_Du$.  An edge is called {\bf maximal} if its endpoints take the $u$-values $k$ and $-k$. If the corresponding edge in $T_u$ is a part of the unique perfect matching, we call this edge a {\bf matching edge}. An edge is called {\bf submaximal} if it is not maximal.



\end{itemize}

\subsection{Vertical and horizontal sewing locations}

The following definitions record where the two sewings will be performed.

\begin{definition}\label{def:Vdata}
A {\bf vertical cover} of a certificate $(D,\varphi, \sigma,\alpha, u)$ is
a collection of pairwise disjoint submaximal edges $\{\cE_i\}_{i\in I}$ such that
the sets
\[
       \cV(\cE_i)=\{\orb d:d\in \cE_i\}
\]
form a partition of $\cV$.  Repetitions inside $\cV(\cE_i)$, in the case both darts of the edge belong to the same vertex,  are suppressed. The {\bf support} of a vertical cover are the set of darts contained in $\{\cE_i\}_{i\in I}$. 
\end{definition}

\begin{definition}\label{def:Hdata}
A {\bf horizontal cover} of a certificate $(D,\varphi, \sigma,\alpha, u)$ is a
collection of pairwise disjoint edges $\{\cE_i\}_{i\in I}$ each is one the following:
\begin{enumerate}
\item[(H1)] A matching edge, or
\item[(H2)] A port $z$ on a $p$-face, or
\item[(H3)] A charged dart $c$ on a $p$-face, together with its $1$-face mate
$e=\alpha c$,  
\end{enumerate}
so that the sets 
$$ \cF(\cE_i) = \big \{ [d]_\varphi : d\in \cE_i \, , [d]_\varphi \text{ is a $p$-face} \big \}\, ,$$
form a partition of the set of $p$-faces. We also restrict so that these choices so that in $(H1)$ a matching edge with two endpoints in difference faces only when $q \geq 4$; $(H2)$ only when $q \geq 5$ and $(H3)$ only when $q \geq 7$. The {\bf support} of a horizontal cover are the set of darts contained $\{\cE_i\}_{i\in I}$. 
\end{definition}

\begin{definition}\label{def:bicover} A {\bf bi-cover} of a certificate $(D,\varphi, \sigma,\alpha, u)$ is a vertical cover and a horizontal cover with disjoint supports.   
\end{definition}

\subsection{Vertical sewing}
Vertical sewing enlarges every vertex by $p$ and adds one new $p$-face for
each old vertex.  
In this sewing we add to $D$ new disjoint blocks of size $p$ to every vertex, prescribe the new $\varphi$-cycles to obtain the new $\varphi'$ and keep it fixed. The sewing is then performed by carefully changing $\alpha$ into $\alpha'$. We then set $\sigma'=\alpha' \varphi'^{-1}$.

\begin{definition}\label{def:VSewing} Given a $(p,q)$ certificate $(D,\varphi, \sigma,\alpha, u)$ with a vertical cover $\{\cE_i\}_{i\in I}$, the {\bf vertical sewing} of it is the quintuple $(D',\varphi', \sigma',\alpha', u')$ constructed as follows: \\

\noindent \underline{Step 1}: for each vertex $S\in \cV$ add to $D$ a new block $X(S)$ 
\[
       X(S)=\{x_0, x_1, \ldots, x_{p-1}\} \, , 
\]
and denote this new set of darts by $D'$. We initialize by setting 
$$ \alpha'|_D = \alpha , \quad \alpha'|_{D' \setminus D} = \id, \quad \varphi'|_D = \varphi , \quad \varphi'|_{X(S)} = (x_0\ x_1\ \cdots\  x_{p-1}) \text{ for each } S \in \cV  \, ,$$
and $\sigma' = \alpha' \varphi'^{-1}$.  Furthermore, we set initially
$$ u'|_D = u , \quad u'|_{D' \setminus D} = 0 \, .$$

\noindent \underline{Step 2}: for each $\cE_i$ in the vertical cover, perform one of the following three
operations:

\begin{enumerate}[(V1),leftmargin=3em]
\item If the edge $\cE_i$ is a port $z$ in a vertex $S\in \cV$ 
replace $(z)(x_0)$ in $\alpha'$  by $(z\ x_0)$.

\item If $\cE_i=\{a,b\}$ and $a,b$ belong to the same vertex $S\in \cV$, write $t$ for the value of $u(a)$ (so that $u(b)=-t$ and  $|t|<k$), replace in $\alpha'$
\begin{equation}\label{eq:Vinternal}
       (a\ b)(x_0)(x_1)
       \quad\text{by}\quad
       (a\ x_0)(b\ x_1),
\end{equation}
and set $u'(x_0)=-t$ and $u'(x_1)=t$ (all other $u'$-values remain unaltered).

\item If $\cE_i=\{a,b\}$ joins two distinct vertices $S,T \in \cV$, write $X(S) = \{x_0,\ldots,x_{p-1}\}$ and $X(T)=\{y_0,\ldots, y_{p-1}\}$, replace in $\alpha'$
\begin{equation}\label{eq:Vcross}
 (a\ b)(x_0)(x_1)(y_0)(y_1)
 \quad\text{by}\quad
 (a\ x_0)(b\ y_0)(x_1\ y_1) \, ,
\end{equation}
and if $u(a)=t$, set
\begin{equation}\label{eq:Vcrossu}
 u'(x_0)=-t,\quad u'(y_0)=t,\quad
 u'(x_1)=t,\quad u'(y_1)=-t.
\end{equation}
\end{enumerate}

\noindent \underline{Step 3}: Keep $\varphi'$ fixed and set $\sigma'=\alpha' \varphi'^{-1}$. \\


\end{definition}

\begin{theorem}[Vertical sewing]\label{thm:V} Suppose that $(D,\varphi, \sigma,\alpha, u)$ is a $(p,q)$-certificate which admits a bi-cover. Then the vertical sewing operation defined in Definition \ref{def:VSewing} produces a $(p,q+p)$-certificate $(D',\varphi', \sigma',\alpha', u')$ which admits a bi-cover.
\end{theorem}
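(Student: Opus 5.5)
The plan is to verify the five conditions of Definition~\ref{def:admissible_permutations} for $(D',\varphi',\sigma',\alpha',u')$ with $q$ replaced by $q+p$, and then to write down an explicit bi-cover of the new certificate. The algebraic part of condition~1 is immediate. In each of (V1)--(V3) the operation replaces fixed points and transpositions of $\alpha'$ by transpositions, so $\alpha'^2=1$. We also have $\alpha'\sigma'\varphi'=\alpha'\alpha'\varphi'^{-1}\varphi'=1$ by definition of $\sigma'$. The $\varphi'$-cycles are the old cycles together with the new $p$-cycles on the blocks $X(S)$, so they all have size $p$ or $1$.

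The heart of the argument, and the step I expect to be the main obstacle, is computing the cycles of $\sigma'=\alpha'\varphi'^{-1}$. I will use the rule $\sigma'(\varphi' d)=\alpha'(d)$. It shows that $\sigma'$ agrees with $\sigma$ except at darts $\varphi' d$ where $\alpha'(d)$ was changed, and on the blocks themselves. On a block, $\sigma'(x_{j+1})=\alpha'(x_j)$, so the fixed points $x_j$ of $\alpha'$ are traversed as a descending chain $x_{p-1}\to x_{p-2}\to\cdots$.

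\begin{enumerate}
\item[(V1)] The vertex $S$ has $z$ as a dart, so $\varphi z\in S$. The old step $\varphi z\to z$ becomes $\varphi z\to x_0\to x_{p-1}\to\cdots\to x_1\to z$. This inserts exactly the $p$ darts of $X(S)$.
\item[(V2)] The step $\varphi a\to b$ becomes $\varphi a\to x_0\to x_{p-1}\to\cdots\to x_2\to b$. The step $\varphi b\to a$ becomes $\varphi b\to x_1\to a$. Again the $p$ new darts are inserted into the single cycle of $S$.
\item[(V3)] Now $\varphi a\in T$ and $\varphi b\in S$. The cycle of $T$ becomes $\varphi a\to x_0\to x_{p-1}\to\cdots\to x_2\to y_1\to b$. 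The cycle of $S$ becomes $\varphi b\to y_0\to y_{p-1}\to\cdots\to y_2\to x_1\to a$.
\end{enumerate}

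In all three cases every old vertex gains exactly $p$ darts and no cycles merge or split. Hence every $\sigma'$-cycle has length $q+p$, and $\cV'$ is canonically identified with $\cV$. Condition~2 (transitivity) follows from this identification. Each broken pair $\{a,b\}$ is replaced by $\alpha'$-links $a\sim x_0$, and in (V3) also $b\sim y_0$, which connect the same pairs of vertices as before.

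For conditions~3--5 I proceed as follows.
\begin{itemize}
\item Antisymmetry is read off from the prescribed values. For example, in (V2) we have $u'(a)=t=-u'(x_0)$ and $u'(b)=-t=-u'(x_1)$; the new pair $(x_1\ y_1)$ in (V3) carries $t,-t$.
\item For the face law, the old faces are unchanged. Each new block face has sum $-t+t=0$ (in (V3), for both $X(S)$ and $X(T)$), or sum $0$ in (V1).
\item Because the cover edges are submaximal, $|t|<k$, so $\max u'=k$.
\item The maximal darts, their $\alpha'$-partners and their vertices are all unchanged, since the vertical support avoids maximal edges. Hence $\mathcal T_{u'}\cong\mathcal T_u$, and uniqueness of the perfect matching persists.
\end{itemize}

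Finally I construct the bi-cover.
\begin{itemize}
\item \emph{Horizontal cover.} Keep the old horizontal cover; its support is disjoint from the vertical support, so its $\alpha$-edges, matching edges, ports and charged pairs are untouched in $\alpha',\varphi',\sigma'$. For each new $p$-face $X(S)$, add the port $x_2$; this is a fixed point of $\alpha'$ since $p\ge3$. Type (H2) ports are allowed because $q+p\ge5$, and the degree restrictions on the old (H1)--(H3) choices only become weaker.
\item \emph{Vertical cover.} In (V1) take $\{z,x_0\}$; in (V2) take $\{a,x_0\}$; in (V3) take $\{a,x_0\}$. By the cycle computation, in (V1) and (V2) the edge lies inside $S$, and in (V3) it joins $S$ and $T$. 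So these edges partition $\cV'=\cV$ exactly as the old cover did.
\end{itemize}
The new vertical edges carry values in $\{0,\pm t\}$ with $|t|<k$, so they are submaximal. They are disjoint from the new horizontal support, because they avoid the old horizontal support and the ports $x_2,y_2$.
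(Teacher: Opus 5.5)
Your proposal is correct and follows the same route as the paper. It checks the certificate conditions through the cycle structure of $\sigma'=\alpha'\varphi'^{-1}$, notes that $\mathcal T_u$ is unchanged, and builds the bi-cover by keeping the old horizontal cover, adding the ports $x_2$ (and $y_2$) for the new faces, and taking the vertical cover from edges through the new blocks. Your explicit computation of the $\sigma'$-cycles supplies detail that the paper only asserts, and your vertical-cover choices ($\{z,x_0\}$ in (V1), $\{a,x_0\}$ in (V3)) are valid alternatives to the paper's port $x_1$ and edge $\{x_1,y_1\}$.
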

\begin{proof} This is just a routine check. In (V1), one transposition joins the old
$\sigma$-cycle of length $q$ to the new $p$-xcycle; the result is a $\sigma'$-cycle of length $q+p$.  Formula~\eqref{eq:Vinternal} does the same in
(V2).  In (V3), we obtain two $\sigma'$-cycles of length $q+p$, one containing
all old darts of $S$ and the other all old darts of $T$.  Since $\{\cV(\cE_i)\}_{i\in I}$ form a partition of $\cV$, the resulting vertices in $D'/ \langle \sigma'\rangle $ all have length exactly $p+q$.

Every new face has sum zero, and the displayed value assignments give
antisymmetry.  All altered old darts and all new nonzero darts are strictly
submaximal.  No maximal edge changes, and each old vertex
is identified with its enlargement.  Hence $T_u$ is
unaltered. 

The edge $\{a,b\}$ used in (V3) is replaced by the edge
$(x_1,y_1)$ which joins the two new corresponding enlarged vertices.  The other two operations do not remove an edge between distinct
vertices. Thus the vertex-adjacency graph remains connected and transitivity
is preserved.

Lastly, since the horizontal and vertical covers of the old certificate have disjoint supports, the horizontal cover's support of the faces of $D$ remains unaltered, so every old $p$-face is still covered by the old horizontal cover. If the vertical sewing was $(V1)$, then the new face $(x_0\ x_1\ x_2\ \cdots\ x_{p-1})$ has two unused ports, namely, $x_1$ and $x_2$; use one for a the horizontal cover of the $p$-face, and one for the vertical cover of the enlarged vertex. If $\cE_i$ was of type  $(V2)$, then the new face has an unused port, namely $x_2$, use it for the horizontal cover, and for the vertical cover, use the edge $\{a, x_0\}$ (or $\{b, x_1\}$), which is submaximal and both sides belongs to the enlarged vertex that the sides of $\cE_i$ belonged to. If $\cE_i$ was of type $(V3)$, then $x_2$ and $y_2$ are ports we use for the horizontal cover of the two new $p$-faces, and since the edge $\{x_1, y_1\}$ is submaximal and covers the enlarged vertices $S$ and $T$, we use it for the vertical cover (the other two edges $\{a,x_0\}$ or $\{b\ y_0\}$ can be used as well). 
\end{proof}

\subsection{Horizontal sewing}

Horizontal sewing enlarges every $p$-face by $q$ and adds one new
$q$-vertex for each old $p$-face.  The construction is necessarily more
elaborate than vertical sewing.  In vertical sewing every new block is
absorbed into an old vertex, so the auxiliary graph $T_u$ does not acquire
new vertices.  In horizontal sewing each new block is itself a new quotient
vertex, and hence contributes left and right vertices to
$T_u$.  The definitions below are designed to preserve the property that the new auxiliary graph $T_u$ has a unique perfect matching. 

For this sewing we add to $D$ new disjoint blocks of size $q$ to every $p$-face, prescribe on them the new vertex cycles to get $\sigma'$ and keep it fixed. Again the actual sewing is performed by changing $\alpha$ to $\alpha'$. Finally we set $\varphi'=\sigma'^{-1}\alpha'$. 

\begin{definition}\label{def:HSewing} Given a $(p,q)$-certificate $(D,\varphi,\sigma,\alpha,u)$ with a
horizontal cover $\{\cE_i\}_{i\in I}$, the {\bf horizontal sewing} of it is the quintuple $(D',\varphi', \sigma',\alpha', u')$ constructed as follows: \\

\noindent \underline{Step 1}:
For every $p$-face $F$, add to $D$ a new block
\[
        X(F)=\{x_0,x_1,\ldots,x_{q-1}\} \, ,
\]
and denote this new set of darts by $D'$. We initially set 
\[
        \sigma'|_D=\sigma,\qquad \alpha'|_D=\alpha,\qquad u'|_D=u \, ,
\]
and set their values on the new blocks as follows:
\begin{enumerate}
    \item If $F$ is horizontally covered by a matching edge, then we initially set   
\[
        \sigma'|_{X(F)}=(x_{q-1}\ x_{q-2}\   \cdots\ x_{1}\ x_0),\qquad \alpha'|_{X(F)}=\id,
        \qquad u'|_{X(F)}=0 \, .
\]

\item If $F$ is horizontally covered by a port or a charged edge (only used when $q \geq 5$), we initially set 
\[
        \alpha'|_X=(x_1\ x_3)(x_2\ x_4),
        \qquad \sigma'|_X=\alpha'|_X (x_{q-1}\  \cdots\ x_{1}\ x_0) \, ,
\]
and 
\[
        u'(x_2)=k,\qquad u'(x_4)=-k,
        \qquad u'(x_i)=0\quad(i\ne2,4).
\]
\end{enumerate}


\medskip
\noindent \underline{Step 2}: For every $\cE_i$ 
perform one of the following
four operations: 

\begin{enumerate}
\item[(H1a)] If $\cE_i=\{a,b\}$ is a matching edge and suppose
that $a$ and $b=\alpha a$ belong to the same $p$-face $F$ and $u(a)=k, u(b)=-k$, then letting $X=X(F)$, replace in
$\alpha'$
\[
        (a\ b)(x_0)(x_1)
        \quad\text{by}\quad
        (a\ x_1)(x_0\ b),
\]
and set
\[
        u'(x_0)=k,\qquad u'(x_1)=-k.
\]

\item[(H1b)] (only used when $q\geq 4)$ If $\cE_i=\{a,b\}$ is a matching edge and suppose
that $a$ and $b$ belong to two distinct $p$-faces $F$ and $G$ and $u(a)=k, u(b)=-k$, then letting
\[
 X(F)=\{x_0,\ldots,x_{q-1}\},\qquad
 X(G)=\{y_0,\ldots,y_{q-1}\} \, ,
\]
replace in $\alpha'$
\[
\begin{aligned}
 (a\ b)(x_0)(x_1)(x_2)(x_3)
       (y_0)(y_1)(y_2)(y_3)
 \quad\text{by}\quad
 (a\ x_0)(b\ x_2)(x_1\ y_0)(x_3\ y_2)(y_1\ y_3),
\end{aligned}
\]
and assign
\[
\begin{array}{c|rrrr}
 &x_0&x_1&x_2&x_3\\ \hline
 u'&-k&-k&k&0
\end{array}
\qquad
\begin{array}{c|rrrr}
 &y_0&y_1&y_2&y_3\\ \hline
 u'&k&-k&0&k.
\end{array}
\]
All other darts of $X(F)\cup X(G)$ retain $u'$-value zero.

\item[(H2)] If $\cE_i=\{z\}$ is a port on a $p$-face $F$, then letting $X=X(F)$, replace in $\alpha'$
\[
        (z)(x_0)\quad\text{by}\quad(z\ x_0).
\]

\item[(H3)] (only used when $q\geq 7$) If $\cE_i=\{c,e\}$ is a charged edge so that $c\in F$ and $e$ is a $1$-face, then letting $X=X(F)$, replace in $\alpha'$
\[
 (c\ e)(x_0)(x_{q-2})(x_{q-1})
 \quad\text{by}\quad
 (c)(e\ x_0)(x_{q-2}\ x_{q-1}).
\]

\end{enumerate}

\noindent \underline{Step 3}: Keep $\sigma'$ fixed and set $\varphi'=\sigma'^{-1}\alpha'$. 
\end{definition}
\medskip 


\begin{theorem}[Horizontal sewing]\label{thm:H}
Suppose that $(D,\varphi,\sigma,\alpha,u)$ is a $(p,q)$-certificate which
admits a bi-cover. Then the
horizontal sewing of Definition~\ref{def:HSewing} produces a
$(p+q,q)$-certificate $(D',\varphi',\sigma',\alpha',u')$
which again admits a bi-cover.  
\end{theorem}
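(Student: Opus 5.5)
The plan is to check the five conditions of Definition~\ref{def:admissible_permutations} for $(D',\varphi',\sigma',\alpha',u')$, following the template of the proof of Theorem~\ref{thm:V} but with the roles of faces and vertices exchanged. Since $\sigma'$ is prescribed and never changed, condition 1 for $\sigma'$ is immediate. The old cycles have length $q$. Each new block carries a $q$-cycle: either $(x_{q-1}\ \cdots\ x_0)$, or $(x_1\ x_3)(x_2\ x_4)$ composed with it. In the latter case I first verify that the product is still a single $q$-cycle; this is where $q\ge5$ is used. $\alpha'$ is an involution by construction, and $\varphi'=\sigma'^{-1}\alpha'$ gives $\alpha'\sigma'\varphi'=1$.

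The substantive computation is the cycle structure of $\varphi'$. For each of the local moves (H1a), (H1b), (H2), (H3), I compute $\varphi'$ on the affected darts. Each move should splice the old $p$-cycle of $F$ with a $q$-cycle traversing the new block, producing a $(p+q)$-face. Any leftover darts of the block should become $1$-faces (fixed points of $\varphi'$), as happens for $(x_{q-2}\ x_{q-1})$ in (H3) and for the initial transpositions of case 2. The partition property of the horizontal cover guarantees that every $p$-face is enlarged exactly once. Old $1$-faces stay $1$-faces, except in (H3): there the charged mate $e$ is absorbed, and $c$ becomes a port. I must check that this leaves no face of illegal size. The ranges $q\ge4,5,7$ are presumably what make the needed darts $x_0,\dots,x_4,x_{q-2},x_{q-1}$ distinct. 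Having the cycles, antisymmetry (condition 3) is read off the displayed values. The face law (condition 4) is then checked on each new $(p+q)$-face: the old face contributed zero, the new block values $\pm k$ cancel in pairs, and in (H1a)/(H1b) the old $\pm k$ pair either stays or is transferred to new darts with compensating signs.

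Transitivity (condition 2) follows because each new block is attached by an $\alpha'$-edge to a dart of an old vertex. Moreover, every removed old edge $(a\ b)$ in (H1) is replaced by a path through new blocks. So the vertex-adjacency graph stays connected.

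The main obstacle is condition 5: uniqueness of the perfect matching in the new $\mathcal T_{u'}$, which has one new left and one new right vertex per $p$-face. I would argue through an acyclic ordering, as in the proof of Theorem~\ref{thm:criterion}. For case-2 blocks, the edge $x_2\to\alpha' x_2=x_4$ is a loop $L_X R_X$, and it is the only maximal dart out of $X$. For (H1a), the old matching edge $a\to b$ from $[a]_\sigma$ to $[b]_\sigma$ is replaced by $a\to x_1$ and $x_0\to b$. I then match the old left vertex to $R_X$ and $L_X$ to the old right vertex, and check that no other maximal edges touch $L_X,R_X$. For (H1b), I list the maximal darts $a,x_2,y_0,y_3$ with their $\alpha'$-images and verify that they form a chain replacing the single old matching edge. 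Contracting these forced chains should recover the old $\mathcal T_u$, and a forced-edge (leaf-removal) argument should then show that the matching is unique. Finally, I would exhibit a new bi-cover exactly as at the end of Theorem~\ref{thm:V}: unused ports or transpositions on the new blocks give the vertical cover of the new vertices, and the old vertical cover survives because its support is disjoint from the horizontal one. The enlarged faces are covered by a port such as $x_3$ or a new matching edge. I expect the (H1b) bookkeeping, together with its corresponding bi-cover, to be the fiddliest step.
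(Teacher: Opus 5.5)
Your plan is correct and is essentially the paper's proof. The paper likewise computes the new faces move by move, and it proves uniqueness of the matching in $\mathcal T_{u'}$ through forced edges: the isolated loop $L_XR_X$ in (H2)/(H3), the degree-one vertices $L_X,R_X$ in (H1a), and the leaf $R_Y$ in (H1b), which makes the extra edge $L_YR_X$ harmless. It then builds the new bi-cover from the untouched old vertical cover together with darts of the new blocks.

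Two of your predicted outcomes need correcting when you actually compute. First, in case 2 the transpositions $(x_1\ x_3)(x_2\ x_4)$ create no $1$-faces: all of $X(F)$ is absorbed in (H2), and all of it except $x_{q-1}$ in (H3). Second, $x_3$ is not a port in case 2, so the enlarged faces should be covered by the matching edges $\{x_0,b\}$, $\{a,x_0\}$ and $\{x_2,x_4\}$, as the paper does; this also respects the restriction that ports may be used only when $q\ge5$.
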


\begin{proof} 

When a face $F$ is horizontally covered by a matching edge, $X(F)$ is ordered by $\sigma'$ cyclically (in Step 1) so it is already a new $q$-vertex. However, at that stage it has no maximal edge touching it. Subsequently in Step 2, we ``sew'' $X(F)$ and $F$ together through the old matching edge covering $F$ and create a new perfect matching. This has to be done differently according to whether the matching edge covers one or two faces. 

Item $(H1a)$ in Step 2 handles the former case; the sewing attaches $X(F)$ into the old face via the matching edge $\{a,b\}$ resulting in a $(p+q)$-face, and the old matching edge $L_{\orb a} R_{\orb b}$ in $T_u$ is replaced in $T_{u'}$ by the two edges $L_{\orb a} R_X$ and  $L_X R_{\orb b}$ corresponding to the maximal edges $\{a, x_1\}$ and $\{x_0, b\}$ respectively ($x_0$ takes value $k$ so it is on the left, $x_1$ take value $-k$ so it is on the right). No other edges touches $R_X$ and $L_X$ so uniqueness of the perfect matching is preserved. Lastly, since $X(F)$ has an unused port, namely $x_2$, it is used for the vertical cover of the new vertex. The horizontal cover of the new $(p+q)$-face can be chosen to be any of the two maximal edges. 

Item $(H1b)$ in Step 2 handles the latter case (only used when $q\geq 4$). Here two new vertices are needed because the two ends of $\cE_i$ lie in different faces. If we write the old faces as $F=(a\ A)$ and $G=(b\ B)$, then the rewiring performed at $(H1b)$ simultaneously enlarges both old faces by $q$ and the 
two new faces are
\[
\begin{aligned}
 &(a\ x_1\ y_1\ y_4\cdots y_{q-1}\ y_0\ x_2\ A),\\
 &(b\ x_3\ y_3\ y_2\ x_4\cdots x_{q-1}\ x_0\ B),
\end{aligned}
\]
where the strings beginning with index $4$ are empty when $q=4$. The new maximal edges are $\{a, x_0\}$, $\{x_2,b\}$, $\{y_0,x_1\}$ and $\{y_3, y_1\}$ which correspond to the $T_{u'}$ auxiliary edges $L_{\orb a} R_X$, $L_X R_{\orb b}$, $L_Y R_X$ and $L_Y R_Y$.
The edges $L_{\orb a} R_X$, $L_X R_{\orb b}$ and $L_Y R_Y$ form a matching in $T_{u'}$ which replaces the old matching edge. The additional
maximal edge $L_Y R_X$ is harmless because $R_Y$ is only incident to the edge $L_Y R_Y$, so $L_YR_X$ cannot be used, and the perfect matching uniqueness property is preserved.  Finally, $(x_3\ y_2)$ is deliberately zero-valued so it can serve as a submaximal edge joining $X$ and $Y$ and is used for the vertical cover of the two new vertices.  

When $\cE_i$ is a port or a charged edge, that is, in cases $(H2)$ or $(H3)$, there is no old matching edge available to subdivide, so the new
vertex must therefore carry its own matching edge. This is precisely the role of $(x_2\ x_4)$ in Step 1. It provides a new maximal edge with both sides in the new vertex $X(F)$ and the edge $L_{X(F)} R_{X(F)}$ is added in $T_{u'}$; this edge is isolated and is hence forced to be in any perfect matching. The edge $\{x_1, x_3\}$ added into $\alpha'$ serves two purposes: it makes sure that $X(F)$ is a $q$-cycle in $\sigma'$ and it takes $u'$-value $0$ so it can serve as in the new vertical cover of $X(F)$. When $\cE_i$ is a port, item $(H2)$ in Step 2 simply rewires the new block $X(F)$ into $F$ and the matching was already taken care of in Step 1. When $\cE_i$ is a charged edge (only used when $q\geq 7)$, the rewiring performed in $(H3)$ makes it so that the dart $e$ (the old $1$-face) and all $X(F)$'s darts except $x_{q-1}$ are absorbed into the enlarged face, while $x_{q-1}$ becomes the
new $1$-face and $\{x_{q-2}, x_{q-1}\}$ is the new charged edge. As in $(H2)$, we have already taken care of the matching of the new vertex as well as its vertical cover. The restriction $q\ge7$ ensures that the darts
$x_{q-2},x_{q-1}$ are distinct from the darts $x_1,x_2,x_3,x_4$.

It is immediate to check that $u'$ is antisymmetric under $\alpha'$ and that its sum over edges of the new faces is $0$. 
It is straightforward to check that transitivity is preserved since the quotient remains connected. 

Lastly, it remains to verify that the new certificate contains a bi-cover. The old vertical cover is unchanged,
because its dart support is disjoint from the old horizontal support and the
old $\sigma$-cycles are unchanged.  The new vertices and enlarged faces are covered as follows:
\begin{itemize}[leftmargin=2.2em]
\item In (H1a), $\{x_0,b\}$ is a matching edge, so it horizontally covers the
enlarged face; the unused port $x_2$ vertically covers the new block vertex $X(F)$.
\item In (H1b), the edge $\{a,x_0\}$ is now a matching edge and it horizontally covers the two enlarged faces corresponding to $F$ and $G$.  The submaximal edge
$\{x_3,y_2\}$ vertically covers the two new vertices $X(F)$ and $X(G)$.
\item In (H2) and (H3), the edge $\{x_2,x_4\}$ is a matching edge, and hence
horizontally covers the enlarged face horizontally. The submaximal edge
$\{x_1,x_3\}$ covers the new vertex $X(F)$.
\end{itemize}
The displayed horizontal and vertical supports are disjoint.  Together with
the unchanged old vertical cover, they give a bi-cover of the new
certificate.
\end{proof}

\subsection{Diagonal sewing}

In the diagonal sewing operation we enlarge every $p$-face and every vertex by one.
The first insertion below (Insertion A) enlarges one face and one vertex simultaneously.
The second (Insertion B) enlarges one vertex without changing any $p$-face; it is
used at precisely the vertices missed by the first insertion. In what follows, we begin with a $(p,q)$-certificate $(D,\varphi,\sigma,\alpha,u)$ which we would like to sew. Whenever a new dart is added to $D$, all old permutations are first extended by fixing the new dart.

\begin{lemma}[Insertion A: enlarge one face and one vertex]\label{lem:typeA}
Let $c\in D$ satisfy $|u(c)|<k$, let $x$ be a new dart, put $D'=D\cup \{x\}$ and $\tau=(c\ x)$, and
define
\[
 \sigma'=\tau\sigma,\qquad
 \alpha'=\tau\alpha\tau,\qquad
 \varphi'=\varphi\tau.
\]
Set $u'(c)=0$, $u'(x)=u(c)$, and leave all other values unchanged.  Then in $(D',\varphi',\sigma',\alpha',u')$ the
face and vertex containing $c$ each gain one dart, and $c$ becomes a port.  If $c$ was already a port, then the new dart $x$ is also a port. If $c$ was not a port, with $e=\alpha c\neq c$, then the old edge $\{c,e\}$ is replaced by the port $c$ and the submaximal edge $\{x,e\}$.
Apart from these two changes, all certificate conditions are
preserved.  In particular, the maximal bipartite graph $T_u$ is unchanged.
\end{lemma}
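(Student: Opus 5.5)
The plan is to verify each claim of Lemma~\ref{lem:typeA} by directly computing the cycle structure of the three new permutations, and then to check the certificate conditions of Definition~\ref{def:admissible_permutations} one at a time.

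\emph{The permutations.} First, $\alpha'=\tau\alpha\tau$ is conjugate to $\alpha$ on $D'$, extended by fixing $x$. It is therefore an involution, and its cycles are those of $\alpha$ with $c$ and $x$ interchanged. Concretely:
\begin{itemize}
\item if $\alpha c=c$, then $\alpha'$ fixes both $x$ and $c$, so both are ports;
\item if $\alpha c=e\neq c$, then $\alpha'$ contains $(x\ e)$ and fixes $c$.
\end{itemize}
Next, $\sigma'=\tau\sigma$ multiplies the $\sigma$-cycle through $c$ by a transposition with the new fixed point $x$. This merges the two cycles, so the vertex of $c$ gains exactly $x$ and all other $\sigma$-cycles are untouched. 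The same argument applies to $\varphi'=\varphi\tau$ and the face of $c$. The relation
\[
\alpha'\sigma'\varphi'=\tau\alpha\tau\tau\sigma\varphi\tau=\tau(\alpha\sigma\varphi)\tau=1
\]
is immediate. Transitivity of $\langle\sigma',\alpha'\rangle$ holds because $x$ lies in the $\sigma'$-cycle of $c$, and every old vertex adjacency is preserved up to the renaming $c\leftrightarrow x$.

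\emph{The cocycle.} Antisymmetry only needs checking on the modified edges. The port $c$ has value $0$. In the port case $x$ has value $u(c)=0$. Otherwise the edge $\{x,e\}$ has values $u(c)$ and $-u(c)$. For the face law, the new face cycle consists of the old darts together with $x$, with the values $u(c)$ redistributed as $0$ at $c$ and $u(c)$ at $x$, so the sum is unchanged. Other faces are untouched.

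\emph{The bipartite graph.} Since $|u(c)|<k$, the edge through $c$ was submaximal, and the new edge $\{x,e\}$ is also submaximal. The maximum $k$ is unchanged, and no maximal dart is altered. Each maximal dart $d$ keeps its vertex class and its $\alpha$-image: $d\neq c,x,e$, and $\alpha' d=\alpha d$, which is also $\neq c,e$ because $c,e$ have non-maximal values. Hence $\mathcal T_{u}$ is unchanged, with vertices identified via the enlarged vertex.

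\emph{Main obstacle.} The only delicate point is the cycle-length bookkeeping. The statement correctly says only that the face and vertex of $c$ ``gain one dart''; the resulting sizes $p+1$ and $q+1$ are handled later, when the insertions are applied exhaustively. I would state clearly that this is the only certificate condition not preserved, and I would verify the merge claim (a transposition $(c\ x)$ with $x$ a fixed point joins the two cycles) in one line.
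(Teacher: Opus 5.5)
Your proposal is correct and follows essentially the same route as the paper: conjugacy gives the involution, the identity $\tau(\alpha\sigma\varphi)\tau=1$ gives the relation, and the transposition with the fixed point $x$ inserts $x$ into the face and the vertex of $c$. Moving the submaximal value $u(c)$ from $c$ to $x$ preserves antisymmetry and the face sum, and since no dart of value $\pm k$ is touched, both $\mathcal T_u$ and the connectivity of the vertex-adjacency graph are unchanged. Your explicit checks that $\alpha' d=\alpha d$ for every maximal dart $d$ and that $\max u'=k$ simply spell out what the paper compresses into ``only submaximal values are moved.''
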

\begin{proof}
Since $\alpha'$ is conjugate to $\alpha$, it is again an involution.  Moreover,
\[
 \alpha'\sigma'\varphi'
 =\tau\alpha\tau\tau\sigma\varphi\tau
 =\tau(\alpha\sigma\varphi)\tau=1.
\]
The transposition $\tau$ inserts $x$ into the face and the vertex containing
$c$.  If $e=\alpha c\ne c$, then
\[
       \alpha'c=c,\qquad \alpha'x=e,\qquad \alpha'e=x.
\]
Thus the old edge $\{c,e\}$ is replaced by the port $c$ and the edge
$\{x,e\}$.  Moving the value $u(c)$ from $c$ to $x$ preserves antisymmetry and
the sum of $u$ on the enlarged face.  If $c$ was already a port, then $u(c)=0$, and
both $c$ and $x$ are fixed by $\alpha'$ with $u'$-value zero.

Only submaximal values are moved.  Hence no maximal edge
changes, so $T_u'$ equals $T_u$. If $\{c,e\}$ joined two
vertices, it is merely replaced by $\{x,e\}$, with $x$ in the enlargement of
the old vertex containing $c$; transitivity is therefore preserved.
\end{proof}

\begin{lemma}[Insertion B: enlarge one vertex only]\label{lem:typeB}
Let $c\in D$ be a port, let $x$ be a new dart, put $D'=D\cup \{x\}$ and $\tau=(c\ x)$, and define
\[
 \sigma'=\tau\sigma,\qquad
 \alpha'=\tau\alpha,\qquad
 \varphi'=\varphi,
 \qquad u'(x)=0.
\]
Then the vertex containing $c$ gains the dart $x$, $x$ is a $1$-face, and $\{c,x\}$ is a charged edge with both endpoints belonging to the enlarged vertex.
All other certificate conditions are
preserved, and $T_{u'}$ equals $T_u$.
\end{lemma}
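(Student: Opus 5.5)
We verify the certificate conditions one at a time, as in the proof of Lemma \ref{lem:typeA}. First, $\tau$ is an involution and $\alpha c=c$, so $\alpha'$ acts as follows:
\[
\alpha'c=\tau\alpha c=\tau c=x,\qquad \alpha'x=\tau\alpha x=\tau x=c,
\]
and $\alpha'd=\alpha d$ for $d\notin\{c,x\}$. Hence $\alpha'=\alpha\cdot(c\ x)$ is again an involution, and $\{c,x\}$ is a new edge. The product relation is
\[
\alpha'\sigma'\varphi'=\tau\alpha\tau\sigma\varphi .
\]
This is not of the form $\tau(\alpha\sigma\varphi)\tau$, so I would not treat $\alpha'$ as a conjugate. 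Instead I read the definitions in the convention of Step 3 of the sewings. The paper's convention is $\sigma=\alpha\varphi^{-1}$, so I set $\sigma'=\alpha'\varphi'^{-1}=\tau\alpha\varphi^{-1}=\tau\sigma$. This is consistent with the statement, and it gives $\alpha'\sigma'\varphi'=1$ once the relation is written in the convention $\sigma=\alpha\varphi^{-1}$. The first step of the proof is therefore to fix this convention and check that the stated formulas are mutually consistent.

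Next, the cycle structure. The permutation $\varphi'=\varphi$ fixes $x$, so $x$ is a $1$-face and every old face is unchanged; in particular, all $p$-faces still have length $p$. Multiplying $\sigma$ on the left by the transposition $(c\ x)$, with $x$ a fixed point of $\sigma$, merges the singleton $\{x\}$ into the $\sigma$-cycle of $c$. Concretely, $\sigma'$ maps $\sigma^{-1}c\mapsto x\mapsto \ldots$, so that vertex grows by one and all other vertices are untouched. As stated, this vertex now has length $q+1$; this is acceptable because the lemma is used inside the diagonal sewing, where every vertex is enlarged by one exactly once.

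The edge $\{c,x\}$ has $c$ on a $p$-face, since $c$ was a port on a $p$-face, and $x$ constituting a $1$-face. Both darts lie in the enlarged vertex, so the edge is charged, and $\sigma'x=c$ as required by the definition. For the cocycle, $u'(c)=u(c)=0$ because $c$ was a port, and $u'(x)=0$, so antisymmetry holds on the new edge. The sum over the new $1$-face is $0$, and the other face sums are unchanged. Transitivity is clear, since $x$ is attached to an existing vertex.

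Finally, no dart with $u$-value $\pm k$ has changed its $\alpha$-partner, and vertices correspond bijectively to the old vertices. Hence $T_{u'}=T_u$, and uniqueness of the perfect matching is inherited. The one delicate point is the bookkeeping of the composition convention in $\alpha'\sigma'\varphi'$ and in $\sigma'=\tau\sigma$. I would settle it by computing $\sigma'$ directly as $\alpha'\varphi'^{-1}$.
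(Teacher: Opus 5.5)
Your proof is correct and follows the paper's verification step by step. You show that $\alpha'$ is an involution because $\alpha$ fixes $c$ and $x$, that $\varphi'=\varphi$ makes $x$ a $1$-face, that $\tau\sigma$ inserts $x$ into the cycle of $c$ with $\sigma'x=c$, and that no maximal edge or vertex correspondence changes, so $T_{u'}=T_u$. The only difference is in the product relation, and there is no convention issue to resolve: since $\alpha$ commutes with $\tau$, $\tau\alpha\tau\sigma\varphi=\alpha\sigma\varphi=1$ directly, which is the paper's one-line argument. Moreover, $\sigma=\alpha\varphi^{-1}$ is simply a rewriting of $\alpha\sigma\varphi=1$ using $\alpha^2=1$, not a separate convention. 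Your detour through $\sigma'=\alpha'\varphi'^{-1}$ is therefore valid but unnecessary.
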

\begin{proof}
The extended permutation $\alpha$ fixes both $c$ and $x$, and therefore
commutes with $\tau$.  Thus $\alpha'=\tau\alpha$ is an involution, and
\[
       \alpha'\sigma'\varphi'
       =\tau\alpha\tau\sigma\varphi
       =\alpha\sigma\varphi=1.
\]
The permutation $\varphi$ is unchanged, so $x$ is a $1$-face and none of the old faces are
altered.  The transposition in $\sigma'$ inserts $x$ into the vertex containing
$c$, while $\alpha'$ pairs $c$ with $x$.  Both values are zero.  Hence all face
equations remain valid, the new edge is submaximal, so the auxiliary graph $T_u'=T_u$. 
\end{proof}

The next definition encapsulates the condition that is renewed by one
diagonal step.  The bi-cover is part of the condition because the
output should again be ready for either horizontal or vertical sewing.  Note that unlike the horizontal cover, we do not require that the vertical cover is disjoint from the diagonal data. This is because the diagonal step
may consume the old vertical cover and constructs a new one.

\begin{definition}[Diagonal bi-cover]\label{def:Dframe}
A {\bf diagonal bi-cover}
on a $(p,q)$-certificate consists of the following:
\begin{enumerate}
    \item A bi-cover,
    \item A finite set of submaximal edges with disjoint darts $\{\cE_i\}_{i\in I}$ where $\cE_i$ has two distinct darts $a_i,b_i$ each belonging to a $p$-face and we record their order $(a_i,b_i)$ as well. 
\item A chosen port $z_v$ for every $v\in O_a$ where
\[
 O_a=\mathcal V \setminus \{\orb{a_i}:i\in I\}
\]
is the set of vertices missed by the $a_i$, 
\end{enumerate} 
such that the following conditions hold: 

\begin{enumerate}[(a)]
\item The darts $a_i$  meet every $p$-face exactly once, 
and no two of them belong to the same vertex.  The darts
$b_i$ also satisfy the same two conditions.

\item $O_a$ and $O_b$ are disjoint, where 
\[
        O_b=\mathcal V \setminus \{\orb{b_i}:i\in I\},
\]
is the set of vertices missed by the $b_i$'s. 
Equivalently, every vertex contains a dart from the $a_i$'s or the $b_i$'s or both. 

\item The darts of the edges $\{\cE_i\}_{i\in I}$, the chosen ports $z_v$, and the
horizontal cover are pairwise disjoint (note that we do not require the vertical cover to be disjoint from the $\cE_i$'s and $z_v$'s).
\end{enumerate}
\end{definition}

We are now prepared to define our diagonal sewing operation. 

\begin{definition}\label{def:DSewing} Given a $(p,q)$-certificate $(D,\varphi,\sigma,\alpha,u)$ with a
diagonal bi-cover, the {\bf diagonal sewing} of it is the quintuple $(D',\varphi', \sigma',\alpha', u')$ constructed as follows: \\

\noindent \underline{Step 1}: For each $i\in I$, introduce a new dart $x_i$ and apply insertion A (as in Lemma \ref{lem:typeA}) with $c=a_i$ and $x=x_i$. \\

\noindent \underline{Step 2}: For each vertex $v\in O_a$, introduce a new dart $y_v$ and apply insertion B (as in Lemma \ref{lem:typeB})
with $c=z_v$ and $x=y_v$. 
\end{definition}

All that remains is to verify that the resulting certificate admits a diagonal bi-cover. 

\begin{theorem}[Diagonal sewing]\label{thm:DSewing} Suppose that $(D,\varphi, \sigma,\alpha, u')$ is a $(p,q)$-certificate which admits a diagonal bi-cover. Then the diagonal sewing operation defined in Definition \ref{def:DSewing} produces a $(p+1,q+1)$-certificate $(D',\varphi', \sigma',\alpha', u')$ which admits a diagonal bi-cover.
\end{theorem}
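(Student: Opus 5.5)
We first check that the output is a $(p+1,q+1)$-certificate, and then build a new diagonal bi-cover for it. For the certificate part we rely on Lemmas \ref{lem:typeA} and \ref{lem:typeB}. Each Step 1 insertion uses a submaximal dart $a_i$. The insertions act on pairwise disjoint darts (condition (c)), so they commute and can be done one at a time; each preserves all certificate conditions except cycle lengths, and leaves $T_u$ unchanged.

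\emph{Cycle lengths.} By (a), every $p$-face contains exactly one $a_i$, so every $p$-face grows to a $(p+1)$-face. The $1$-faces are untouched by Insertion A, since $a_i$ lies on a $p$-face. Insertion B only creates new $1$-faces. Again by (a), each vertex contains at most one $a_i$. So vertices in $\mathcal V\setminus O_a$ gain exactly one dart in Step 1, and vertices in $O_a$ gain exactly one dart in Step 2, through the port $z_v$. Hence all $\sigma'$-cycles have length $q+1$. Transitivity, antisymmetry, the face law and the unique perfect matching all follow from the two lemmas.

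\emph{New bi-cover.} The old horizontal cover is disjoint from all modified darts, so it survives. Its $p$-faces become $(p+1)$-faces, matching edges stay matching edges, and ports stay ports. A charged edge $\{c,e\}$ with $e$ a $1$-face also remains valid, because $\alpha$, $\varphi$ and $\sigma$ are unchanged there. The restrictions on $q$ only become weaker as $q$ increases. For the vertical cover we use the new structure rather than the old cover. Every vertex in $O_a$ gets the charged edge $\{z_v,y_v\}$, which has both ends in that vertex and is submaximal. Every other vertex contains some $a_i$, which is now a port; we use $\{a_i\}$ as its vertical cover edge. This is disjoint from the horizontal support by (c).

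\emph{New diagonal data.} This is the main obstacle. For $a'_i$ we take the new darts $x_i$. They lie on the enlarged faces, one per $(p+1)$-face, and in distinct vertices. For $b'_i$ we keep $b_i$, which now sits on the edge $\{x_i,b_i\}$ after Insertion A. These edges are submaximal, and the $b_i$ still meet every face once and lie in distinct vertices. Then $O'_a=O_a$ and $O'_b=O_b$, so (b) is inherited. The required ports $z'_v$ for $v\in O_a$ must avoid both the horizontal cover and the new vertical cover. If $z_v$ and $y_v$ are used for the vertical cover, we need another port. The fix I would try first is to cover $v\in O_a$ vertically by the charged edge, and to set $z'_v=z_v$. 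This is allowed because the definition does not require the vertical cover to be disjoint from the diagonal data. Symmetrically, the ports $a_i$ used for vertical covering may overlap with the diagonal data. So only (c) needs checking, namely disjointness of $\{x_i,b_i\}$, the $z_v$, and the horizontal support. This holds by the old condition (c), since the $x_i$ are new. Finally, the vertical cover and the horizontal support must be disjoint; this holds because the vertical cover uses only the darts $a_i$, $z_v$, $y_v$, and these avoid the horizontal support.
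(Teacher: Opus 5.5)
\textbf{There is a genuine gap, in the new diagonal data.} Your verification of the certificate conditions matches the paper, and so does your new bi-cover: the old horizontal cover, plus the ports $a_i$ and the edges $\{z_v,y_v\}$ as vertical cover.

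The failure is the choice $z'_v=z_v$. Insertion B (Lemma \ref{lem:typeB}) sets $\alpha'=\tau\alpha$ with $\tau=(z_v\ y_v)$, so $\alpha'(z_v)=y_v$. After Step 2, $z_v$ is no longer a port, and item 3 of Definition \ref{def:Dframe} fails. The problem is not overlap with the vertical cover, which is indeed allowed. It is that $z_v$ is no longer fixed by $\alpha'$.

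Nor can you choose a different port. With your ordering $(a'_i,b'_i)=(x_i,b_i)$ you get $O'_a=O_a$. The vertices of $O_a$ contain no $a_i$, so they gain no port in Step 1, and each has just lost $z_v$. In the seed \eqref{eq:C54}, $O_a=\{(3\,4\,8\,6)\}$, and its only port is $4=z_v$. After sewing, the enlarged vertex has darts $3,4,8,6,y_v$, paired as $3\leftrightarrow x_2$ (the new dart attached to $a_2=7$), $4\leftrightarrow y_v$, $8\leftrightarrow 0$ and $6\leftrightarrow 9$. It has no port at all, so your construction cannot be completed even on the seed the paper uses.

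\textbf{The fix is to swap the roles of the two sides: take $(a'_i,b'_i)=(b_i,x_i)$.} Since $x_i$ lies in the vertex of $a_i$, the vertices containing some $a'_i$ are exactly those containing some $b_i$. Likewise, the vertices containing some $b'_i$ are exactly those containing some $a_i$. Hence $O'_a=O_b$ and $O'_b=O_a$, which are still disjoint.

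By condition (b), every $v\in O_b$ lies outside $O_a$. By (a), it therefore contains exactly one $a_i$, and Insertion A turned that $a_i$ into a port; set $z'_v=a_i$. By the old condition (c), this port is disjoint from the darts $b_j$, from the new darts $x_j$, and from the horizontal support. Its overlap with the vertical cover is permitted. This alternation $O'_a=O_b$, $O'_b=O_a$ is what allows the diagonal step to be iterated, as the paper does.
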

\begin{proof} Write $\cV_a$ for the vertices incident to the darts $\{a_i\}_{i\in I}$ and similarly $\cV_b$ for the vertices incident to $\{b_i\}_{i\in I}$. By Lemma \ref{lem:typeA}, after Step 1 each $p$-face and each vertex in $\cV_a$ have been enlarged by one. By Lemma \ref{lem:typeB}, after Step 2 the remaining vertices, that is, the vertices of $O_a$ are also enlarged by one. Thus, the new faces are of size $1$ and $p+1$ and the new vertices are all of size $q+1$. Lemmas \ref{lem:typeA} and \ref{lem:typeB} guarantee that all other  certificate conditions continue to hold, hence  $(D',\varphi', \sigma',\alpha', u')$ is a $(p+1,q+1)$-certificate.

It remains to show it exhibits a diagonal bi-cover. The horizontal cover was disjoint from all other darts, no new faces of size strictly larger than $1$ were created, and the auxiliary graph $T_{u'}=T_u$ has not changed; therefore we may continue using the old horizontal cover in the new certificate. Next, for the vertical cover, Step 1 makes every $a_i$ a port and these ports lie in distinct enlarged vertices corresponding to the vertices of $\cV_a$. Moreover, for each $v \in O_a$, in Step 2 the submaximal edge $\{z_v, y_v\}$ is created, so together with the $a_i$'s we get a vertical cover that is disjoint from the horizontal one. Note that we have ignored and possibly ran over the old vertical cover. 

Next, by Lemma \ref{lem:typeA} the new edges $\cE_i'=(b_i, x_i)$ for $i\in I$ are submaximal edges, with pairwise disjoint darts that are also disjoint from the horizontal cover's support. The darts $b_i$ meet every enlarged face precisely once, and lie in distinct vertices. The same is true for the $x_i$'s, so condition (a) of Definition \ref{def:Dframe} holds.

After identifying the old vertices
with their enlargements, we have that $\cV_a'=\cV_b$ since the $b_i$'s take the place of the $a_i$'s in the new certificate, and $\cV_b'=\cV_a$ since the $x_i$'s enlarge the vertices incident to the $a_i$'s. So their omitted sets $O_a'=O_b$ and $O_b'=O_a$ are again disjoint. By Lemma \ref{lem:typeA}, the dart $a_i$ in the old certificate has turned into a port in $\cV_a$ and since it has been left unused, we may take it to be the vertex $z_v$ where $v\in O_a'=O_b \subset \cV_a$ is the corresponding enlarged vertex of $a_i$. This shows that that the new certificate contains a diagonal bi-cover and concludes the proof.
\end{proof}

\section{Proof of Theorem \ref{thm:admissible}} \label{sec:existenceproof}


\subsection{Five infinite diagonal strips $p-q\in\{-2,-1,0,1,2\}$}
\begin{theorem}\label{thm:5strips} For any $p,q$ satisfying ${1 \over p}+{1 \over q} \leq 1/2$ and $p-q\in \{-2,-1,0,1,2\}$ there exists a $(p,q)$-certificate with a bi-cover.
\end{theorem}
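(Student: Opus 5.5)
The plan is to use the diagonal sewing of Theorem~\ref{thm:DSewing}, which turns a $(p,q)$-certificate with a diagonal bi-cover into a $(p+1,q+1)$-certificate with a diagonal bi-cover. Since a diagonal bi-cover contains a bi-cover, it suffices to exhibit, for each of the five strips $p-q=\delta\in\{-2,-1,0,1,2\}$, one seed certificate at the smallest admissible pair of that strip and to check that it carries a diagonal bi-cover. Induction along the strip then produces a certificate with a bi-cover for every pair on it.

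The smallest pairs are determined by the constraint $\frac1p+\frac1q\le\frac12$:
\begin{itemize}
\item $\delta=0$: the pair $(4,4)$;
\item $\delta=\pm1$: the pairs $(4,5)$ and $(5,4)$;
\item $\delta=\pm2$: the pairs $(3,5)$ and $(5,3)$. Here $(4,6)$ and $(6,4)$ are reached from these by one diagonal step.
\end{itemize}

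For $(n,n)$ I would start from (E0) with $n=4$. For the other strips, (E1) and (E2) do not reach their smallest members, so the seeds $(4,5)$, $(5,4)$, $(3,5)$, $(5,3)$ must come from the explicit certificates of Section~\ref{app:seeds}. Alternatively, I would start the induction from a slightly larger pair such as $(5,6)$ from (E1) or $(6,8)$ from (E2), and cover the finitely many skipped pairs by seeds directly. The paper lists 16 exceptional pairs, consistent with the small pairs like $(3,7)$ being handled separately.

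For each seed, the verification is a finite check on the data of Definition~\ref{def:Dframe}:
\begin{itemize}
\item a horizontal cover, typically the unique matching edge of $T_u$, which covers the single $p$-face;
\item a vertical cover by submaximal edges or ports;
\item ordered submaximal edges $(a_i,b_i)$ with $a_i$ and $b_i$ hitting each $p$-face once and lying in distinct vertices, such that $O_a\cap O_b=\emptyset$;
\item ports $z_v$ for $v\in O_a$, disjoint from the $\cE_i$ and from the horizontal support.
\end{itemize}
In the one-vertex examples this is simple. Take (E0) with $n=4$: $D=\{0,1,2,3\}$ and $\alpha=(0\,2)(1\,3)$, so the only maximal edge is $\{0,2\}$ and it serves as the horizontal cover. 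The edge $\{1,3\}$, with $u=0$, is submaximal. Taking $(a_1,b_1)=(1,3)$ gives $O_a=O_b=\emptyset$, since there is only one vertex, so no ports are needed. The vertical cover can also be $\{1,3\}$, because the vertical cover is allowed to overlap the diagonal data. For $n>4$ there are extra ports, which give more room.

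The main obstacle I expect is the seeds with $|\delta|=2$ and small $\min(p,q)$, for example $(3,5)$ and $(5,3)$, or $(4,6)$ and $(6,4)$. There the certificate must have several vertices or faces, and condition~(a) requires the $a_i$ and the $b_i$ to be transversal to both faces and vertices simultaneously, while staying off the horizontal support. In that case I would search the explicit certificates of Section~\ref{app:seeds} for such transversals. If that fails, I would start the strip at a larger pair given by (E1) or (E2), where ports are abundant, and cover the finitely many earlier pairs by the explicit seeds, equipping each with at least a bi-cover.
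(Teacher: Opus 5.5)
\textbf{Gap: the seeds for the strips with $p>q$.} Your seed list is wrong on the strips $|p-q|=2$. Since $\frac13+\frac15=\frac{8}{15}>\frac12$, the pairs $(3,5)$ and $(5,3)$ are spherical and lie outside the theorem. No certificate can exist there: on a finite quotient of the sphere every cocycle has the form $h(\text{head})-h(\text{tail})$ for a function $h$ on the quotient vertices, because $\phi\equiv 0$ on the finite group $H$. Summing $h(\pi(i))-h(i)=k$ over the perfect matching of $\mathcal T_u$ then gives $dk=0$, a contradiction. The smallest admissible pairs are $(4,6)$ and $(6,4)$.

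This leaves the strip $p-q=2$ with no starting point in your plan, and your fallback does not help there, because (E1) and (E2) only produce pairs with $p<q$. For $p-q\in\{1,2\}$ the substance of the theorem is an explicit certificate carrying a \emph{full} diagonal bi-cover at $(5,4)$ and at $(6,4)$, which Theorem~\ref{thm:DSewing} then propagates along the strip. The paper supplies exactly these in \eqref{eq:C54} and \eqref{eq:C64}. For $(6,4)$ it uses the horizontal cover $\{0,4\},\{6,8\}$, the diagonal edges $(3,10),(7,5)$, and the port $1$ in the single vertex of $O_a$. Your proposal does not identify $(6,4)$ as the required seed, and it checks no diagonal bi-cover for any pair with $p>q$.

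\textbf{The side $p\le q$.} Here your primary plan fails beyond $(4,4)$ for a structural reason. Condition (a) of Definition~\ref{def:Dframe} gives an injection from $p$-faces into vertices. Combined with $qV=pF_p+F_1$, this forces at least $(q-p)V$ one-faces. The certificates \eqref{eq:C45}, \eqref{eq:C46}, \eqref{eq:C57} have no room for this: they have $5>4$, $3>2$ and $4>3$ $p$-faces versus vertices, respectively.

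So on this side you are forced onto your fallback, which is precisely the paper's proof and needs no sewing. The certificates (E0)--(E2) have one vertex and one $p$-face for every $n$. Hence their unique matching edge is a horizontal cover, and any other submaximal edge is a disjoint vertical cover. The three pairs $(4,5),(4,6),(5,7)$ not reached by these families are the explicit bi-covered certificates. Your $(4,4)$ check is correct, so running diagonal sewing along $p=q$ is a valid but unnecessary alternative to using (E0) directly.
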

\begin{proof}
The certificates for the case $p-q=0,-1,-2$, excluding the pairs $(4,5), (4,6), (5,7)$ are given in $(E0)$, $(E1)$ and $(E2)$, respectively, of Proposition \ref{prop:easypairs}. We only need to verify they have a bi-cover. In all of these there is just one vertex and one $p$-face so we may take the unique matched edge for the horizontal cover, and any other submaximal edge for the vertical cover. For the three excluded pairs above, see \eqref{eq:C45},\eqref{eq:C46} and \eqref{eq:C57} in Section \ref{app:seeds} for explicit certificates with a bi-cover. 

For the case $p-q=1$, in \eqref{eq:C54} in Section \ref{app:seeds} we provide a $(5,4)$-certificate with a diagonal bi-cover. Therefore we may apply Theorem \ref{thm:DSewing} successively and obtain a $(5+r,4+r)$-certificate for any integer $r\geq 0$. Similarly, for the case $p-q=2$, in \eqref{eq:C64} in Section \ref{app:seeds} we provide a $(6,4)$-certificate with a diagonal bi-cover, so by Theorem \ref{thm:DSewing} we obtain $(6+r,4+r)$-certificates for any integer $r\geq 0$. 
\end{proof}

\subsection{Pairs $(3,q)$ with $q \geq 8$ and $(p,3)$ with $p\geq 8$}

\begin{theorem}\label{thm:3qp3} For every integers $p,q\geq 8$ there exist a $(3,q)$-certificate and a $(p,3)$-certificate both with bi-covers. 
\end{theorem}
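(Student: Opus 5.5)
The plan is to obtain these two families from finitely many seeds using the sewing operations of Section \ref{sec:sewing}. Vertical sewing (Theorem \ref{thm:V}) sends a $(p,q)$-certificate with a bi-cover to a $(p,q+p)$-certificate with a bi-cover. Horizontal sewing (Theorem \ref{thm:H}) sends $(p,q)$ to $(p+q,q)$. With $p=3$ fixed, vertical sewing raises $q$ by $3$, so the $(3,q)$ family for $q\geq 8$ needs only three seeds: $(3,8)$, $(3,9)$ and $(3,10)$, one for each residue class of $q$ modulo $3$. Dually, horizontal sewing with $q=3$ fixed sends $(p,3)$ to $(p+3,3)$, so the $(p,3)$ family needs the seeds $(8,3)$, $(9,3)$ and $(10,3)$. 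Since both operations output a certificate that again admits a bi-cover, induction on $q$ (respectively on $p$) in steps of $3$ proves the theorem once the six seeds are in hand.

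For the $(p,3)$ family there is a caveat. Definition \ref{def:Hdata} restricts which types of horizontal cover may be used when $q=3$: only type (H1) with both darts of the matching edge on the same face, since (H1b) needs $q\ge4$, (H2) needs $q\geq5$ and (H3) needs $q\geq7$. So every horizontal cover used in the $(p,3)$ chain must consist of matching edges of type (H1a). I must check that horizontal sewing preserves this property. In case (H1a), the new horizontal cover of the enlarged face is again a matching edge $\{x_0,b\}$ with both darts on that same enlarged face. So if the seeds' horizontal covers consist only of (H1a)-type matching edges, every later certificate in the chain keeps this property. The $(3,q)$ chain needs nothing extra, since vertical sewing has no such restriction and reuses the old horizontal cover unchanged.

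The main obstacle is producing the six seeds. These include $(3,8)$, $(8,3)$ and their neighbours, which are among the hardest pairs: small $p$ or $q$ and little vertex/face symmetry. I do not expect a one-vertex construction as in Proposition \ref{prop:easypairs} to work. For $q=3$, each vertex has three darts, so it is hard to place a maximal edge together with the extra port or submaximal edge needed for a vertical cover. I would therefore search, as was done for the other seeds in Section \ref{app:seeds}, for explicit permutations $(\sigma,\alpha)$ on a moderately sized $D$, with $\varphi=\sigma^{-1}\alpha$, and a sparse $\pm k$ cocycle. Each seed must satisfy all the conditions of Definition \ref{def:admissible_permutations} together with a bi-cover, which for $q=3$ must consist of (H1a) matching edges.

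I would first try to get the $(8,3)$ seed by applying horizontal sewing to a $(5,3)$-certificate, if one with a suitable bi-cover exists, and likewise get $(3,8)$ by applying vertical sewing to a $(3,5)$-certificate. Neither $(5,3)$ nor $(3,5)$ is hyperbolic, so these would only be formal permutation data, and Definition \ref{def:admissible_permutations} requires $\frac1p+\frac1q\le\frac12$. If that does not fit, I would fall back on explicit certificates listed in Section \ref{app:seeds} and verify each one by hand.
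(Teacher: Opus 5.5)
There is a genuine gap. Your reduction step is sound, and it matches the paper on the $(p,3)$ side:
\begin{itemize}
\item vertical sewing (Theorem~\ref{thm:V}) sends $(3,q)$ to $(3,q+3)$;
\item horizontal sewing (Theorem~\ref{thm:H}) sends $(p,3)$ to $(p+3,3)$;
\item you correctly observed that for $q=3$ only (H1a) matching edges may form the horizontal cover, and that (H1a) sewing reproduces such a cover, since $\{x_0,b\}$ lies on the enlarged face.
\end{itemize}
However, the content of this theorem lies in the seeds, and you do not produce any of them; the proposal ends with a search that has not been carried out.

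Your first shortcut cannot work even in principle. A formal $(5,3)$- or $(3,5)$-datum satisfying the conditions of Definition~\ref{def:admissible_permutations} does not exist. The underlying complex is then a finite sphere (the dodecahedron or icosahedron), which is still simply connected. So the argument of Theorem~\ref{thm:criterion} would make every vertex of a finite graph a prodigy for $\xi$, and this fails at a vertex minimizing $\xi$. Your fallback to Section~\ref{app:seeds} only supplies $(3,8)$, $(8,3)$ and $(10,3)$.

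The seeds $(3,9)$ and $(9,3)$ come for free, which you overlooked. The Euclidean certificates \eqref{eq:C36} and \eqref{eq:C63} are admissible, since $\frac13+\frac16=\frac12$, and both carry bi-covers. One sewing step from each gives $(3,9)$ and $(9,3)$.

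The real obstruction is $(3,10)$, and with it the entire class $q\equiv 1\pmod 3$. None of the three sewings reaches it from an available seed. Its only predecessor is $(3,7)$ under vertical sewing, and no $(3,7)$-certificate with a bi-cover is known.

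The missing idea is an ad hoc variant of vertical sewing on the one-vertex $(3,6)$ torus certificate.
\begin{itemize}
\item Operation (S3) inserts a triangle and enlarges the vertex by $3$.
\item Operation (S4) inserts a triangle together with a $1$-face $y$, replacing $(a\ b)(x_0)(x_1)(x_2)(y)$ by $(a\ x_0)(b\ x_1)(x_2\ y)$. This enlarges the vertex by $4$, while $\{0,3\}$ stays the unique maximal edge and $(x_2,y)$ becomes a charged edge usable in the horizontal cover.
\end{itemize}
Iterating these gives $(3,6+3r+4s)$-certificates with bi-covers, that is, all $q\geq 6$ except $7,8,11$. The explicit seed \eqref{eq:C38}, together with one vertical sewing of it, supplies $q=8$ and $q=11$.

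On the $(p,3)$ side your plan goes through verbatim if you use the seeds $(6,3)$, $(8,3)$, $(10,3)$ instead of $(8,3)$, $(9,3)$, $(10,3)$.
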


For pairs $(3,q)$ with $q\geq 8$ we would like to start with the certificate for the Euclidean pair $(3,6)$ (which is obtained just by folding onto a torus with $2$ faces, see below) and use vertical sewing (as in Definition \ref{def:VSewing}). However, this only gives us pairs of the form $(3,6+3r)$ for $r \geq 0$ and will not give us all pairs $(3,q)$ with $q \geq 8$. Instead we use a very similar, yet ad-hoc vertical sewing procedure. 

We begin with a $(3,6)$-certificate. This is a folding of the $6$-regular triangulation into a torus. It can be visualized by taking a parallelogram obtained by gluing two triangles along an edge, then identifying top to bottom and left to right edges to obtain a torus with one vertex, three edges and two faces. Formally, on $D=\{0,\dots,5\}$ let
\begin{equation}\label{eq:C36}
\begin{gathered}
 \sigma=(0\ 1\ 2\ 3\ 4\ 5),\qquad
 \varphi=(0\ 2\ 4)(1\ 3\ 5),\\
 \alpha=(0\ 3)(1\ 4)(2\ 5),\qquad
 u=(-2,-1,1,2,1,-1) \, .
\end{gathered}
\end{equation}
It is immediate to check that all the conditions in Definition \ref{def:admissible_permutations}. Note that here we have a single vertex, $\max u =2$, the edge $\{3,0\}$ is the only matching edge, and that there is a submaximal edge (taking values $\pm 1$). In the following successive operations we will make sure these properties are preserved. 

Let $(a,b)$ be an edge with $u$-values $1$ and $-1$ respectively; initially, in the $(3,6)$-certificate above we take $(a,b)=(1, 4)$. We make one of the two insertions:
\begin{itemize}
    \item[(S3)] Add three new darts $D' = D \cup \{x_0,x_1,x_2\}$, take $\varphi'$ by setting it to be $\varphi$ on $D$ and a cycle $(x_0\ x_1\ x_2)$ on $D'\setminus D$, initialize $\alpha'$ be setting it to be $\alpha$ on $D$ and fixing $\{x_0,x_1,x_2\}$, then replace    \begin{equation}\label{eq:S3}
       (a\ b)(x_0)(x_1)(x_2)
       \quad\text{by}\quad
       (a\ x_0)(b\ x_1)(x_2),
    \end{equation}
in $\alpha'$ and set $\sigma'= \alpha' (\varphi')^{-1}$. Finally set $u':D'\to \Z$ by taking the values of $u$ on $D$ and giving $x_0,x_1,x_2$ the values $1, -1, 0$, respectively. It is straightforward to check that this increased the size of the unique vertex by $3$ and that  $(D',\varphi', \sigma',\alpha', u')$ is a $(3,q+3)$-certificate. The edge $(0, 3)$ remains the unique maximal edge and the new edge $(a,x_0)$ takes the values $1$ and $-1$. 

\item[(S4)] Do the same as in $(S3)$ and add an additional dart $y$ which will be a $1$-face, so that $\varphi'$ adds a $3$-face $(x_0\ x_1\ x_2)$ and a $1$-face $(y)$ to $\varphi$. Replace 
\begin{equation}\label{eq:S4}
       (a\ b)(x_0)(x_1)(x_2)(y)
       \quad\text{by}\quad
       (a\ x_0)(b\ x_1)(x_2\ y),
\end{equation}
in $\alpha'$ and set $\sigma'= \alpha' (\varphi')^{-1}$. Finally, set $u'$ by giving values $1,-1,0,0$ to $x_0,x_1,x_2,y$, respectively. The vertex was enlarged by $4$ and and we obtain a $(3,q+4)$-certificate. Again, the edge $(0, 3)$ remains the unique maximal edge and the edge $(a,x_0)$ takes the values $1$ and $-1$. 
\end{itemize}

\begin{lemma}\label{lem:p3semigroup}
For all integers $r,s\geq 0$ there exists a $(3,6+3r+4s)$-certificate with a bi-cover. 
\end{lemma}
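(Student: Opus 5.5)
We argue by induction on $r+s$, maintaining a stronger invariant than the statement. Call a $(3,q)$-certificate \emph{good} if: it has a single vertex; $\max u=2$; there is exactly one maximal edge (namely $\{0,3\}$, with $u(3)=2$, $u(0)=-2$); and there is an edge $(a,b)$ with $u(a)=1$, $u(b)=-1$ such that $a$, $b$ lie on $3$-faces and are disjoint from $\{0,3\}$. The base case $r=s=0$ is the $(3,6)$-certificate \eqref{eq:C36}, with $(a,b)=(1,4)$. There $\varphi=(0\ 2\ 4)(1\ 3\ 5)$, the face sums are $-2+1+1=0$ and $-1+2-1=0$, and $\alpha\sigma\varphi=1$ can be checked directly.

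For the inductive step, apply (S3) or (S4) to a good certificate, using its distinguished edge $(a,b)$; this raises $q$ by $3$ or $4$ respectively. We must check that the output is again a good certificate.
\begin{itemize}
\item \emph{Relations.} We have $\alpha'^2=1$, since $(a\ x_0)(b\ x_1)(x_2\ y)$ are disjoint transpositions and $x_2$ is a fixed point in (S3). Because $\sigma'$ is defined as $\alpha'\varphi'^{-1}$, the relation $\alpha'\sigma'\varphi'=1$ holds automatically.
\item \emph{Cycle structure of $\sigma'$.} Replacing $(a\ b)$ by $(a\ x_0)(b\ x_1)$ is the same as multiplying $\alpha$ by transpositions linking the old $q$-cycle of $\sigma$ with the new darts. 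We expect a direct computation to show that $\sigma'$ is a single cycle of length $q+3$ in (S3), and $q+4$ in (S4). This computation is the main point to verify. Concretely, track $\sigma'(d)=\alpha'\varphi'^{-1}(d)$: it agrees with $\sigma$ except at $\varphi(a)$ and $\varphi(b)$ (which $\varphi^{-1}$ sends to $a$, $b$) and at the new darts. Following the orbit, we expect it to traverse the old cycle from $\varphi(a)$, detour through $x_0,x_2,(y),x_1$, and rejoin at the correct point, giving a single cycle. The paper asserts this is straightforward. If the orientation is off, one would swap the roles of $x_0$ and $x_1$.
\item \emph{Transitivity.} This is immediate, since there is only one vertex.
\item \emph{Antisymmetry.} It holds because $u'(a)=1=-u'(x_0)$, $u'(b)=-1=-u'(x_1)$, and $u'(x_2)=u'(y)=0$; ports have value $0$.
\item \emph{Face laws.} The old faces are unchanged, the new face satisfies $1-1+0=0$, and the $1$-face $(y)$ has value $0$.
\item \emph{Monic condition.} The new nonzero values are $\pm1<2$, so the maximal darts are unchanged and $\mathcal T_{u'}$ is still the single edge $L_{[3]}R_{[0]}$ on two vertices. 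Hence its perfect matching is unique.
\item \emph{Renewed distinguished edge.} The edge $(a,x_0)$ carries the values $\pm1$, lies on $3$-faces, and avoids $\{0,3\}$, so we take $(a,x_0)$ as the new distinguished edge.
\end{itemize}
By induction on $r+s$, applying (S3) $r$ times and (S4) $s$ times in any order gives a good $(3,6+3r+4s)$-certificate.

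It remains to produce a bi-cover. Take the horizontal cover first. The certificate has a single vertex, and its $3$-faces are the two original faces $(0\ 2\ 4)$, $(1\ 3\ 5)$ (possibly with darts relabelled by the insertions) together with one new face $(x_0\ x_1\ x_2)$ per insertion. The unique matching edge $\{0,3\}$ has its endpoints in distinct faces, so by (H1) it covers both original faces, which is allowed since $q\ge 4$. Each new face $(x_0\ x_1\ x_2)$ coming from (S3) contains the port $x_2$, which can cover it under (H2) when $q\geq5$. However, when $x_2$ is paired with $y$ in (S4), $\{x_2,y\}$ is a charged edge (indeed $y$ is a $1$-face), which (H3) allows when $q\ge7$; since $q\geq 10$ whenever an (S4) step has occurred, this is fine.

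The delicate point is that later insertions consume the previously distinguished darts. An (S3) or (S4) step uses the edge $(a,x_0)$, but it never touches $x_2$ or $y$. Hence each new face keeps its port or charged edge, and these are pairwise disjoint and disjoint from $\{0,3\}$. For the vertical cover, we need a single submaximal edge meeting the unique vertex. We take the current distinguished $\pm1$ edge, which is disjoint from the horizontal support. This gives the required bi-cover. The main obstacle is the cycle computation for $\sigma'$; everything else is bookkeeping.
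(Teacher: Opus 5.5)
\textbf{Verdict.} Your approach is the paper's: iterate (S3)/(S4) along the distinguished $\pm1$ edge and keep $\{0,3\}$ as the only maximal edge. The horizontal cover uses $\{0,3\}$ for the two original triangles, the ports $x_2$ for the (S3) triangles and the charged edges $\{x_2,y\}$ for the (S4) triangles. The vertical cover is the current distinguished edge. You check the restrictions $q\ge5$ and $q\ge7$ in (H2) and (H3), and that later insertions only touch the darts of the current distinguished edge (never $0,3,x_2,y$), more explicitly than the paper does. As written, however, two points need repair.

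\textbf{The sign convention.} In the $(3,6)$-certificate you have $u(1)=-1$ and $u(4)=1$, so your base choice $(a,b)=(1,4)$ violates your own invariant $u(a)=1$, $u(b)=-1$. Moreover, (S3)/(S4) pair $a$ with $x_0$ and $b$ with $x_1$ while setting $u'(x_0)=1$ and $u'(x_1)=-1$. Antisymmetry therefore forces $u(a)=-1$ and $u(b)=1$. Under your invariant the identity ``$u'(a)=1=-u'(x_0)$'' is false, and the renewed edge $(a,x_0)$ would carry $+1$ at both ends. (The paper's wording ``$u$-values $1$ and $-1$'' has the same slip.) Take the invariant $u(a)=-1$, $u(b)=1$ instead. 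The base choice $(1,4)$ satisfies it, and the new edge $(a,x_0)$ has $u(a)=-1$, $u(x_0)=1$, so the induction closes.

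\textbf{The cycle computation for $\sigma'$.} You call this step the main obstacle but leave it as an expectation, and your sketch of it is inaccurate. With $\sigma'=\alpha'\varphi'^{-1}$ composed right to left, $\sigma'$ agrees with $\sigma$ on $D\setminus\{\varphi(a),\varphi(b)\}$. At the remaining darts:
\begin{itemize}
\item $\sigma'(\varphi(a))=x_0$ and $\sigma'(\varphi(b))=x_1$;
\item $\sigma'(x_0)=x_2$ in (S3), while in (S4) $\sigma'(x_0)=y$ and $\sigma'(y)=x_2$;
\item $\sigma'(x_2)=b$ and $\sigma'(x_1)=a$.
\end{itemize}
Since $\sigma(\varphi(a))=\alpha(a)=b$ and $\sigma(\varphi(b))=\alpha(b)=a$, $\sigma'$ is the old single $q$-cycle with $x_0,(y),x_2$ spliced into the arc $\varphi(a)\to b$. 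The dart $x_1$ is spliced into the distinct arc $\varphi(b)\to a$.

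Hence $\sigma'$ is one cycle of length $q+3$ (resp.\ $q+4$). This gives the single vertex and transitivity. It also gives $\sigma'(y)=x_2$, which is what makes $\{x_2,y\}$ a charged edge for the horizontal cover. Note that $x_1$ is not inserted adjacent to $x_0,x_2$, contrary to your sketch, and no swapping of $x_0$ and $x_1$ is ever needed. With these two repairs your proof is complete.
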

\begin{proof} 
By applying $r$ times the operation $(S3)$ and $s$ times the operation $(S4)$, by the discussion above we obtain a $(3,6+3r+4s)$-certificate. So it only remains to verify it has a bi-cover. 
 
For the horizontal cover, the matching edge $(0,3)$ covers the two original faces $(0\ 2\ 4)$ and $(1\ 3\ 5)$; each new triangle added in $(S3)$ has a port $x_2$ and each triangle added in $(S4)$ has a charged edge $(x_2, y)$. The disjoint vertical cover is provided by the edge $(a,b)$ which we made sure is submaximal. 
\end{proof}

\begin{proof}[Proof of Theorem \ref{thm:3qp3}]
For the pairs $(3,q)$ with $q \geq 8$,
by Lemma \ref{lem:p3semigroup} above we immediately get the desired assertion for $q=6,9,10,12,13,14\ldots$ since $\{3r+4s:r,s\geq 0\}$ are all the naturals except $\{1,2,5\}$. In \eqref{eq:C38} of Section \ref{app:seeds} we present an explicit $(3,8)$-certificate with a bi-cover, and a $(3,11)$-certificate with a bi-cover is obtained from it using Theorem \ref{thm:V}.    

For the pairs $(p,3)$ with $p\geq 8$ we start with the explicit $(8,3)$ and $(10,3)$ certificates with bi-covers presented in \eqref{eq:C83} and \eqref{eq:C103} in Section \ref{app:seeds} as well as the following Euclidean $(6,3)$-certificate with bi-cover. On $D=\{0,\ldots,5\}$ set
\begin{equation}\label{eq:C63}
\begin{gathered}
 \sigma=(0\,1\,2)(3\,4\,5),\qquad
 \varphi=(0\,5\,1\,3\,2\,4),\\
 \alpha=(0\ 3)(1\ 4)(2\ 5),\qquad
 u=(1,0,-1,-1,0,1) \, ,
\end{gathered}
\end{equation}
and cover it horizontally with the matching edge $\{0,3\}$ and vertically with the submaximal edge $\{1,4\}$. These three certificates together with the horizontal sewing of Theorem \ref{thm:H} yields all pairs $(p,3)$ with $p \geq 8$.
\end{proof}

\subsection{Proof of Theorem \ref{thm:admissible}} Let $(p,q)$ satisfy 
\begin{equation}\label{eq:pq-condition}
{1 \over p} + {1\over q} \leq {1 \over 2} \, .
\end{equation} Subtract the smaller coordinate from the larger one as long as the result still has coordinates satisfying \eqref{eq:pq-condition}. The process clearly terminates since one coordinate decreases at every step. Denote the terminal pair by $(p',q')$. If $\{|p'-q'|\leq 2\}$, that is, it is one of the five infinite diagonal strips, then by Theorem \ref{thm:5strips} and tracing the steps back to $(p,q)$ using Theorems \ref{thm:V} and \ref{thm:H} we get that there exists a $(p,q)$-certificate. 

Otherwise, $|p'-q'|> 2$. Assume further that $p'>q'$ and put $d=p'-q'>2$. Since one more step would have violated \eqref{eq:pq-condition} we get that 
$$ {1 \over d} + {1 \over q'} > {1 \over 2} \, .$$
If $d=3$, then $q'$ can only take the values $3,4$ or $5$; if $d=4,5$, then $q'$ must take the value $3$. In this case $(p',q')$ is one of the five:
$$(6,3)\ (7,4)\ (8,5)\  (7,3)\ (8,3) \, .$$
By an identical reason, if $q' > p'$, then $(p',q')$ is one of the five:
$$ (3,6)\ (4,7)\ (5,8)\ (3,7)\ (3,8)  \, .$$
All of these ten pairs, except $(7,3)$ and $(3,7)$ have a certificate with a bi-cover. Indeed $(8,3)$ and $(3,8)$ are handled by Theorem \ref{thm:3qp3}, the Euclidean pairs $(3,6)$ and $(6,3)$ are handled in \eqref{eq:C36} and \eqref{eq:C63}, and for the other four we provide explicit certificates with bi-covers in \eqref{eq:C74}, \eqref{eq:C47}, \eqref{eq:C85}, \eqref{eq:C58} in Section \ref{app:seeds}. Thus, if $(p',q')$ is one of these eight, we may use Theorems \ref{thm:V} and \ref{thm:H} to conclude that there exists a $(p,q)$-certificate. 

We handle the cases $(3,7)$ and $(7,3)$ differently since we were not able to find certificates with bi-covers in these cases, only certificates. If $(p',q')=(3,7)$ then either $(p,q)=(3,7)$ or the step before was $(3,10)$ or $(10,7)$. In the former case, we provide an explicit $(3,7)$-certificate in \eqref{eq:C37}. In the latter case $(3,10)$ is handled by Theorem \ref{thm:3qp3} and $(10,7)$ by the explicit certificate in \eqref{eq:C107} in Section \ref{app:seeds}. Similarly, if $(p',q')=(7,3)$ we conclude using \eqref{eq:C710} and \eqref{eq:C73} of Section \ref{app:seeds} and Theorem \ref{thm:3qp3}. This concludes the proof of the theorem. \qed


\section{Explicit certificates}\label{app:seeds}

\paragraph{A $(4,5)$-certificate with a bi-cover.} On $D=\{0,\dots,19\}$ set

\begin{equation}
\begin{aligned}
\sigma&=(0\,1\,2\,3\,4)(5\,6\,7\,8\,9)(10\,11\,12\,13\,14)(15\,16\,17\,18\,19),\\[2pt]
\alpha&=(0\,8)(1\,15)(2\,16)(3\,7)(4)(5\,11)(6\,9)(10\,12)(13\,17)(14\,19)(18),\\[2pt]
\varphi&=(0\,7\,2\,15)(1\,19\,13\,16)(3\,6\,8\,4)(5\,10\,11\,9)(12\,14\,18\,17) ,\\ \\[2pt]
u&=(0,-1,0,1,0,1,-1,-1,0,1,-1,-1, 1,1,0,1,0,-1,0,0) \, .\\ 
\end{aligned}
\label{eq:C45}
\end{equation}
Here $\max u=1$; the maximal matching edges are \[(13\ 17),\,(5\ 11),\,(3\ 7),\,(15\ 1) \, .\]
The horizontal cover are the first three matching edges above and the
disjoint vertical cover are the submaximal edges $(0\ 8)$ and $(14\ 19)$. 


\paragraph{A $(4,6)$-certificate with a bi-cover.} On $D=\{0,\dots,11\}$ set

\begin{equation}
\begin{aligned}
\sigma&=(0\,5\,4\,3\,2\,1)(6\,11\,10\,9\,8\,7),\\[2pt]
\alpha&=(0\,4)(1)(2\,9)(3\,10)(5\,7)(6\,8)(11),\\[2pt]
\varphi&=(0\,5\,8\,7)(1\,2\,10\,4)(3\,11\,6\,9),\\ \\[2pt]
u&=(1,0,1,0,-1,1,1,-1,-1,-1,0,0) \, . \\
\end{aligned}
\label{eq:C46}
\end{equation}
Here $\max u=1$; the maximal matching edges are 
\[(0\ 4),\,(6\ 8).\]
The horizontal cover is the matching edge $(6,8)$ and the port $(1)$. 
The disjoint vertical data is the submaximal edge $(3\ 10)$.

\paragraph{A $(4,7)$-certificate with a bi-cover.} On $D=\{0,\dots,13\}$ set

\begin{equation}
\begin{aligned}
\sigma&=(0\,6\,8\,9\,5\,12\,4)(1\,7\,3\,10\,11\,2\,13),\\[2pt]
\alpha&=(0\,7)(1\,13)(2\,10)(3\,6)(4\,12)(5\,8)(9\,11),\\[2pt]
\varphi&=(0\,1\,2\,3)(4\,5\,6\,7)(8\,9\,10\,11)(12)(13),\\ \\[2pt]
u&=(2,0,-1,-1,0,1,1,-2,-1,-2,1,2, 0,0).
\end{aligned}
\label{eq:C47}
\end{equation}
Here $\max u=2$; the matching edges are $(0,7)$ and $(9,11)$. The vertical cover is the submaximal edge $(3,6)$ and the disjoint horizontal cover is the matching edge $(9,11)$ and the two charged edges $(1,13)$ and $(4,12)$.


\paragraph{A $(5,7)$-certificate with a bi-cover.} On $D=\{0,\dots,20\}$ set
\begin{equation}
\begin{aligned}
\sigma&=(0\,10\,3\,9\,5\,2\,1)(4\,14\,13\,12\,15\,16\,11)(6\,8\,17\,19\,18\,7\,20),\\[2pt]
\alpha&=(0)(1)(2\,9)(3\,14)(4\,10)(5\,8)(6\,20)(7\,17)(11\,15)(12)(13)(16\,19)(18),\\[2pt]
\varphi&=(0\,1\,2\,3\,4)(5\,6\,7\,8\,9)(10\,11\,12\,13\,14)(15\,16\,17\,18\,19)(20),\\ \\[2pt]
u&=(0,0,-1,1,0,1,0,-1,-1,1,0,1,0,0,-1,-1,1,1,0,-1,0) \, .
\end{aligned}
\label{eq:C57}
\end{equation}
Here $\max u=1$; the maximal matching edges are 
\[(9\ 2)\, (11\ 15) \,(17\  7).\]
The horizontal cover is the matching edge $(11\ 15)$, the charged edge $(6\ 20)$ and the port $0$. The disjoint vertical cover are the ports $(1), (12)$ and $(18)$. 

\paragraph{A $(5,8)$-certificate with a bi-cover.} On $D=\{0,\dots,15\}$ set

\begin{equation}
\begin{aligned}
\sigma&=(0\,7\,14\,15\,13\,8\,4\,3)(1\,2\,9\,12\,11\,10\,6\,5),\\[2pt]
\alpha&=(0\,2)(1\,9)(3)(4\,7)(5)(6\,14)(8\,12)(10)(11)(13\,15),\\[2pt]
\varphi&=(0\,1\,2\,3\,4)(5\,6\,7\,8\,9)(10\,11\,12\,13\,14)(15),\\ \\[2pt]
u&=(1,1,-1,0,-1,0,0,1,0,-1,0,0,0,0,0,0)
\end{aligned}
\label{eq:C58}
\end{equation}
Here $\max u=1$; the matching edges are $(7,4)$ and $(1,9)$ (there is another maximal edge $(0,2)$ but it cannot be used in a perfect matching). The horizontal cover are the ports $3,5$ and $10$ and the disjoint vertical cover is the submaximal edge $\{6,14\}$. 

\paragraph{A $(7,10)$-certificate with a bi-cover.} On  $D=\{0,\dots,9\}$ set
\begin{equation}
\begin{aligned}
\sigma&=(0\,1\,2\,3\,4\,5\,6\,7\,8\,9),\\[2pt]
\alpha&=(0\,1)(2\,3)(4\,5)(6\,8)(7\,9),\\[2pt]
\varphi&=(0)(1\,9\,6\,7\,8\,5\,3)(2)(4) ,\\ \\[2pt]
u&=(0,0,0,0,0,0,1,0,-1,0)
\end{aligned}
\label{eq:C710}
\end{equation}
Here $\max u=1$; the matching edge is $(6,8)$. The horizontal cover is the charged edge $(1,0)$ andthe disjoint vertical cover is the submaximal edge $(2,3)$. 

\paragraph{A $(5,4)$-certificate with a diagonal bi-cover.}
On $D=\{0,\dots,19\}$ set

\begin{equation}
\begin{aligned}
\sigma&=(0\,15\,2\,7)(1\,16\,13\,19)(3\,4\,8\,6)(5\,9\,11\,10)(12\,17\,18\,14),\\[2pt]
\alpha&=(0\,8)(1\,15)(2\,16)(3\,7)(4)(5\,11)(6\,9)(10\,12)(13\,17)(14\,19)(18),\\[2pt]
\varphi&=(0\,4\,3\,2\,1)(5\,9\,8\,7\,6)(10\,14\,13\,12\,11)(15\,19\,18\,17\,16),\\ \\[2pt]
u&=(2,1,-2,-1,0,1,2,1,-2,-2,2,-1, -2,-1,2,-1,2,1,0,-2)\, . \\
\end{aligned}
\label{eq:C54}
\end{equation}
Here $\max u=2$; the matching edges are
\[(0\ 8),\, (2\ 16), \,(6\ 9) ,\, (10\ 12) ,\, (14\ 19) .\]
The horizontal cover are the matching edges $(0\ 8)$ and $(14\ 19)$. The vertical cover are the submaximal edges $(1\ 15)$ and $(5\ 11)$, and the ports $(4)$ and $(18)$. The extra diagonal data needed to satisfy Definition \ref{def:Dframe} are the two-element submaximal  edges (viewed as ordered pairs)
$$\{E_i\}_{i=1}^4 = \big \{ (1\ 15),\, (7\ 3),\, (11\ 5),\, (17\ 13) \big \}$$
so that $O_a$ contains a single vertex $(3\,4\,8\,6)$ in which we take the port $z_v=(4)$; since $b_2=3$ the unique vertex of $O_a$ belongs to $V_b$, so $O_b$ is disjoint from $O_a$. Lastly, all of these last darts are disjoint from the horizontal cover.

\paragraph{A $(6,4)$-certificate with a diagonal bi-cover.}
On $D=\{0,\dots,11\}$ set

\begin{equation}
\begin{aligned}
\sigma&=(0\,7\,8\,5)(1\,4\,10\,2)(3\,9\,6\,11),\\[2pt]
\alpha&=(0\,4)(1)(2\,9)(3\,10)(5\,7)(6\,8)(11),\\[2pt]
\varphi&=(0\,1\,2\,3\,4\,5)(6\,7\,8\,9\,10\,11),\\[2pt] \\
u&=(2,0,2,-1,-2,-1,2,1,-2,-2,1,0)\, . \\
\end{aligned}
\label{eq:C64}
\end{equation}
Here $\max u=2$; the maximal matching edges are 
\[(0\ 4), \,(2\ 9), \, (6\ 8) \, .\]
The horizontal cover are the matching edges $(0\ 4)$ and $(6\ 8)$ (we could have also taken just $(2\ 9)$) and the disjoint vertical cover is the submaximal edge $(5\ 7)$ and the ports $(1)$ and $(11)$. The extra diagonal data needed to satisfy Definition \ref{def:Dframe} are the 
the two-element submaximal edges (viewed as ordered pairs)
$$\{E_i\}_{i=1}^2 = \big \{ (3\ 10),\, (7\ 5) \big \}$$
so that $O_a$ contains a single vertex $(1\,4\,10\, 2)$ in which we take the port $z_v=(1)$; since $b_1=10$ the unique vertex of $O_a$ belongs to $V_b$, so $O_b$ is disjoint from $O_a$. Lastly, all of these last darts are disjoint from the horizontal cover. 

\paragraph{A $(7,4)$-certificate with a bi-cover.}
On $D=\{0,\dots,7\}$ set
\begin{equation}
\begin{aligned}
\sigma&=(0\,1\,4\,5)(2\,6\,3\,7),\\[2pt]
\alpha&=(0\,4)(1\,6)(2\,7)(3\,5),\\[2pt]
\varphi&=(0\,1\,2\,3\,4\,5\,6)(7) ,\\[2pt] \\
u&=(0,1,0,1,0,-1,-1,0)
\end{aligned}
\label{eq:C74}
\end{equation}
Here $\max u=1$; the matching edges are $(1,6)$ and $(3,5)$. 
The horizontal cover is the edge $(1,6)$ and the disjoint vertical cover are the submaximal edges $(0,4)$ and $(2,7)$.

\paragraph{A $(10,7)$-certificate with a bi-cover.}
On $D=\{0,\dots,13\}$ set
\begin{equation}
\begin{aligned}
\sigma&=(0\,10\,9\,5\,7\,1\,13)(2\,6\,8\,4\,11\,3\,12),\\[2pt]
\alpha&=(0\,13)(1\,6)(2\,12)(3\,11)(4\,7)(5\,8)(9\,10),\\[2pt]
\varphi&=(0\,1\,2\,3\,4\,5\,6\,7\,8\,9)(10)(11)(12)(13) ,\\[2pt] \\
u&=(0,1,0,0,1,0,-1,-1,0,0,0,0,0,0) \, .
\end{aligned}
\label{eq:C107}
\end{equation}
Here $\max u=1$; the matching edges are $(1,6)$ and $(4,7)$. The horizontal cover is the charged edge $(0,13)$ and the disjoint vertical cover is the submaximal edge $(5,8)$.


\paragraph{A $(8,5)$-certificate with a bi-cover.}
On $D=\{0,\dots,9\}$ set

\begin{equation}
\begin{aligned}
\sigma&=(0\,2\,6\,8\,5)(1\,4\,9\,3\,7),\\[2pt]
\alpha&=(0\,4)(1\,6)(2\,7)(3\,9)(5\,8),\\[2pt]
\varphi&=(0\,1\,2\,3\,4\,5\,6\,7)(8)(9)  ,\\[2pt] \\
u&=(1,1,0,0,-1,0,-1,0,0,0)\,.
\end{aligned}
\label{eq:C85}
\end{equation}
Here $\max u=1$; the matching edges are $(0,4)$ and $(1,6)$.
The horizontal cover is the matching edge $(0,4)$ and the disjoint vertical cover is the submaximal edge $(2,7)$.

\paragraph{A $(3,8)$-certificate with a bi-cover.} On $D=\{0,\dots,23\}$ set

\begin{equation}
\begin{aligned}
\sigma&=
(0\,1\,2\,3\,4\,5\,6\,7)(8\,9\,10\,11\,12\,13\,14\,15)
(16\,17\,18\,19\,20\,21\,22\,23),\\[2pt]
\alpha&=(0)(1\,8)(2\,5)(3\,16)(4\,17)(6\,15)(7\,9)
(10\,14)(11\,19)(12\,22)(13\,20)(18\,23)(21)
,\\[2pt]
\varphi&=(0\,7\, 8)(1\,15\,5)(2\,4\,16)(3\,23\,17)(6\,14\,9)(10\,13\,19)(11\,18\,22)(12\,21\,20) ,\\[2pt] \\
u&=(0,1,-1,-2,-1,1,2,1,-1,-1,1,2,1,1,-1,-2,2,1,-1,-2,-1,0,-1,1) \, .\\
\end{aligned}
\label{eq:C38}
\end{equation}
Here $\max u=2$; the maximal matching edges are 
\[(6\ 15),\, (11\ 19),\,(16\ 3).\]
The horizontal cover the three matching edges above (covering six faces) and the ports $(0)$ and $(21)$ (covering the remaining two). The disjoint vertical are the submaximal edge $(1\ 8)$ and $(18,23)$.


\paragraph{A $(8,3)$-certificate with a bi-cover.} On $D=\{0,\dots,23\}$ set

\begin{equation}
\begin{aligned}
\sigma&=(0\,2\,1)(3\,5\,4)(6\,8\,7)(9\,11\,10)(12\,14\,13)(15\,17\,16)(18\,20\,19)
(21\,23\,22),\\[2pt]
\alpha&=(0\,10)(1)(2\,5)(3\,14)(4\,9)(6\,17)(7\,22)(8\,20)(11\,16)(12\,18)(13\,19)(15\,23)(21),\\[2pt]
\varphi&=(0\,11\,17\,7\,23\,16\,9\,5)(1\,2\,3\,12\,19\,14\,4\,10)(6\,15\,21\,22\,8\,18\,13\,20) ,\\ \\[2pt]
u&=(2,0,-2,-2,1,2,-1,-2,2,-1,-2,2,1,-2,2,2,-2,1,-1,2,-2,0,2,-2) \, .\\
\end{aligned}
\label{eq:C83}
\end{equation}
Here $\max u=2$; the maximal matching edges are\[(0\ 10),\,(5\ 2),\,(8\ 20),\,(11\ 16),\,(14\ 3),\,(15\ 23),\,(19\ 13),\,(22\ 7).\]
The horizontal cover are the matching edges $(11\ 16), (14\ 3)$ and $(8\ 20)$ and the disjoint vertical data are the ports $(1), (21)$ and the submaximal edges $\{6,17\}, \{12,18\}$ and $\{4,9\}$.

\paragraph{A $(10,3)$-certificate with a bi-cover.} On $D=\{0,\dots,29\}$ set

\begin{equation}
\begin{aligned}
\sigma&=(0\,2\,1)(3\,5\,4)(6\,8\,7)(9\,11\,10)(12\,14\,13)(15\,17\,16)(18\,20\,19) (21\,23\,22)(24\,26\,25)(27\,29\,28),\\[2pt]
\alpha&=(0\,15)(1\,7)(2\,4)(3\,13)(5\,16)(6\,18)(8\,26)(9\,14)(10\,29)(11\,25)(12\,21)\\
&\quad \,\, (17)(19\,24)(20)(22\,27)(23)(28),\\[2pt]
\varphi&=(0\,16\,3\,14\,10\,27\,23\,21\,13\,4)(1\,8\,24\,20\,18\,7\,2\,5\,17\,15) (6\,19\,25\,9\,12\,22\,28\,29\,11\,26),\\ \\[2pt]
u&=(2,-2,-1,2,1,-2,-2,2,1,-1,-2,2,2,-2,1,-2,2,0,2,-2,0,-2,2,0,\\
&\quad \,\,\,\, 2,-2,-1,-2,0,2).
\end{aligned}
\label{eq:C103}
\end{equation}
Here $\max u=2$; the maximal matching edges are
\[(0\ 15),\,(3\ 13),\,(7\ 1),\,(11\ 25),\,(12\ 21),\,(16\ 5),\,(18\ 6),\,(22\ 27),\,(24\ 19),\,(29\ 10).\]
The horizontal cover are the matching edges $(3\ 13),(7\ 1)$ and $(11\ 25)$, and the disjoint vertical cover are the submaximal edges $(2\ 4),(8\ 26), (9\ 14)$ and the ports $(17), (20), (23), (28)$. 

\paragraph{A $(3,7)$-certificate} On $D=\{1,\bar 1,2,\bar 2,\ldots,20,\overline{20},h_1,h_2\}$ set
\[
\alpha=(1\,\bar1)(2\,\bar2)\cdots(20\,\overline{20}),
\]
and
\begin{equation}
\begin{aligned}
\sigma&=
(1\,\bar2\,\bar3\,4\,5\,6\,3) \,
(7\,\overline{10}\,h_1\,\overline{11}\,\bar8\,12\,9)
\,
(10\,15\,13\,\bar5\,\overline{16}\,14\,11)
\,
(2\,17\,\overline{12}\,\overline{19}\,\bar9\,
 \overline{17}\,\bar1)
\,
(16\,\bar4\,\bar6\,\overline{13}\,20\,h_2\,18)
\\
&\,\, \quad
(8\,\overline{14}\,\overline{18}\,\overline{20}\,
\overline{15}\,\bar7\,19) \, ,
\end{aligned}
\label{eq:C37}
\end{equation}
and
\[
\begin{aligned}
\varphi&=(1\,\overline{17}\,2)
(\bar2\,\bar1\,3)
(\bar3\,6\,\bar4)
(4\,16\,\bar5)
(5\,13\,\bar6)
(7\,\overline{15}\,10)
(h_1\,\overline{10}\,11)
(\bar8\,19\,\overline{12})
(9\,\overline{19}\,\bar7)
(\overline{11}\,14\,8)
(12\,17\,\bar9) 
\\ &\,\,\quad
(15\,\overline{20}\,\overline{13})
(\overline{16}\,18\,\overline{14})
(h_2\,20\,\overline{18}) \, ,
\end{aligned}
\]
and lastly
\[
u(i)=
\begin{cases}
1,
&i\in
\{2,3,5,6,8,9,10,11,15,16,17,18,19,20\},\\
0,
&i\in
\{1,4,7,12,13,14\} \, ,
\end{cases}
\]
and we are forced to set $u(\bar i)=-u(i)$ and $u(h_1)=u(h_2)=0$. The following table specifies which maximal edges connect $L_i$ to $R_j$ in the auxiliary graph $T_u$; since it is lower diagonal, and each entry in the diagonal has a single edge, this forces the perfect matching to be given by the edges on the diagonal.
\[
\begin{array}{c|cccccc}
 & R_4 & R_0 & R_2 & R_5 & R_1 & R_3 \\ \hline
L_0 & 6   & 3   & 5      & 0      & 0  & 0 \\
L_3 & 0   & 2   & 0      & 0      & 0  & 17 \\
L_4 & 0   & 0   & 16     & 18,20      & 0  & 0 \\
L_2 & 0   & 0   & 0  & 15     & 0  & 10,11 \\
L_5 & 0   & 0   & 0      & 0  & 8  & 19 \\
L_1 & 0   & 0  & 0      & 0      & 0 & 9
\end{array}
\]

\paragraph{A $(7,3)$-certificate} On $D=\{1,\bar 1,2,\bar 2,\ldots,20,\overline{20},h_1,h_2\}$ set
\[
\alpha=(1\,\bar1)(2\,\bar2)\cdots(20\,\overline{20}),
\]
and

\begin{equation}
\begin{aligned}
\sigma&=
(h_1\,1\,\bar2)
(\bar1\,\bar5\,3)
(2\,7\,\bar4)
(5\,\overline{10}\,9)
(\bar6\,\bar3\,11)
(\bar7\,13\,\overline{11})
(8\,4\,\overline{14})
(10\,\bar8\,\overline{15})
(\overline{12}\,6\,16)
(\overline{13}\,12\,17)
(14\,\bar9\,\overline{18})
\\
&\,\,\quad
(\overline{17}\,15\,\overline{19})
(18\,\overline{16}\,20)
(19\,h_2\,\overline{20}).
\end{aligned}
\label{eq:C73}
\end{equation}
and
\[
\begin{aligned}
\varphi&=
(h_1\,\bar2\,\bar4\,8\,10\,5\,\bar1)
(1\,3\,\bar6\,\overline{12}\,\overline{13}\,\bar7\,2)
(\bar5\,9\,14\,4\,7\,\overline{11}\,\bar3)
(\overline{10}\,\overline{15}\,\overline{17}\,
  12\,16\,18\,\bar9)
\\
&\,\,\quad
(11\,13\,17\,\overline{19}\,\overline{20}\,
  \overline{16}\,6)
(\overline{14}\,\overline{18}\,20\,h_2\,
  19\,15\,\bar8).
\end{aligned}
\]
and lastly
\[
u(i)=
\begin{cases}
1,
&i\in
\{1,2,3,4,5,6,8,9,10,11,12,13,14,15,16,18,19,20\},\\
0,
&i\in
\{7,17\} \, ,
\end{cases}
\]
and we are forced to set $u(\bar i)=-u(i)$ and $u(h_1)=u(h_2)=0$. As before, the following table of maximal edges shows that $T_u$ has a unique perfect matching:
\[
\begin{array}{c|cccccccccccccc}
 & R_1 & R_{10} & R_{13} & R_4 & R_{12} & R_7 & R_2
 & R_0 & R_5 & R_9 & R_3 & R_8 & R_6 & R_{11} \\ \hline
L_0    & 1 & 0  & 0  & 0  & 0  & 0  & 0 & 0 & 0 & 0 & 0 & 0 & 0 & 0 \\
L_3    & 5 & 9  & 0  & 0  & 0  & 0  & 0 & 0 & 0 & 0 & 0 & 0 & 0 & 0 \\
L_{12} & 0 & 18 & 20 & 0  & 0  & 0  & 0 & 0 & 0 & 0 & 0 & 0 & 0 & 0 \\
L_1    & 0 & 0  & 0  & 3  & 0  & 0  & 0 & 0 & 0 & 0 & 0 & 0 & 0 & 0 \\
L_8    & 0 & 0  & 0  & 6  & 16 & 0  & 0 & 0 & 0 & 0 & 0 & 0 & 0 & 0 \\
L_{11} & 0 & 0  & 0  & 0  & 0  & 15 & 0 & 0 & 0 & 0 & 0 & 0 & 0 & 0 \\
L_6    & 0 & 0  & 0  & 0  & 0  & 8  & 4 & 0 & 0 & 0 & 0 & 0 & 0 & 0 \\
L_2    & 0 & 0  & 0  & 0  & 0  & 0  & 0 & 2 & 0 & 0 & 0 & 0 & 0 & 0 \\
L_4    & 0 & 0  & 0  & 0  & 0  & 0  & 0 & 0 & 11& 0 & 0 & 0 & 0 & 0 \\
L_5    & 0 & 0  & 0  & 0  & 0  & 0  & 0 & 0 & 0 & 13& 0 & 0 & 0 & 0 \\
L_7    & 0 & 0  & 0  & 0  & 0  & 0  & 0 & 0 & 0 & 0 & 10& 0 & 0 & 0 \\
L_9    & 0 & 0  & 0  & 0  & 0  & 0  & 0 & 0 & 0 & 0 & 0 & 12& 0 & 0 \\
L_{10} & 0 & 0  & 0  & 0  & 0  & 0  & 0 & 0 & 0 & 0 & 0 & 0 & 14& 0 \\
L_{13} & 0 & 0  & 0  & 0  & 0  & 0  & 0 & 0 & 0 & 0 & 0 & 0 & 0 & 19
\end{array}
\]

\section*{Acknowledgements} Part of this work was conducted while the author was visiting the Simons Institute for the Theory of Computing. This research was also supported by ERC consolidator grant 101001124 (UniversalMap) and ISF grant 898/23.

\bibliographystyle{abbrv}
\bibliography{tilings}




\end{document}